\numberwithin{equation}{section} \setlength{\textwidth} {15cm}
\def\beq{\begin{equation}}
\def\eeq{\end{equation}}
\def\bC{ {{\mathbb{C}}}}
\def\bR{ {{\mathbb{R}}}}
\def\bp{ {\mathbf{p}} }
\def\fd{ {\mathfrak{d}} }
\newcommand{\dB}{{\mathbb{B}}}
\newtheorem{defn}{{\bf Definition}}[section]
\newtheorem{thm}[defn]{{\bf Theorem}}
\newtheorem{cor}[defn]{{\bf Corollary}}
\newtheorem{lem}[defn]{{\bf Lemma}}
\newtheorem{prop}[defn]{{\bf Proposition}}
\newtheorem{rem}[defn]{{\bf Remark}}
\newtheorem{notation}[defn]{Notation}
\newenvironment{proof}[1][Proof]{\textbf{#1.} }{\hfill \rule{0.5em}{0.5em}}
\begin{document}

\title{Abstract Wiener measure using abelian Yang-Mills action on $\mathbb{R}^4$}
\author{Adrian P. C. Lim \\
Email: ppcube@gmail.com
}

\date{}

\maketitle

\begin{abstract}
Let $\mathfrak{g}$ be the Lie algebra of a compact Lie group. For a $\mathfrak{g}$-valued 1-form $A$, consider the Yang-Mills action \begin{equation} S_{{\rm YM}}(A) = \int_{\mathbb{R}^4}  \left|dA + A \wedge A \right|^2\ d\omega \nonumber
\end{equation}
using the Euclidean metric on $T\mathbb{R}^4$. When we consider the Lie group ${\rm U}(1)$, the Lie algebra $\mathfrak{g}$ is isomorphic to $\mathbb{R} \otimes i$, thus $A \wedge A = 0$. For a simple closed loop $C$, we want to make sense of the following path integral,
\begin{equation}
\frac{1}{Z}\ \int_{A \in \mathcal{A} /\mathcal{G}} \exp \left[  \int_{C} A\right] e^{-\frac{1}{2}\int_{\mathbb{R}^4}|dA|^2\ d\omega}\ DA, \nonumber
\end{equation}
whereby $DA$ is some Lebesgue type of measure on the space $\mathcal{A} /\mathcal{G}$ containing $\mathfrak{g}$-valued 1-forms modulo gauge transformations, and $Z$ is some partition function.

We will construct an Abstract Wiener space for which we can define the above Yang-Mills path integral rigorously, applying renormalization techniques found in lattice gauge theory. We will further show that the Area Law formula does not hold in the abelian Yang-Mills theory.
\end{abstract}

\hspace{.35cm}{\small {\bf MSC} 2020: 81T13, 81T08} \\
\indent \hspace{.35cm}{\small {\bf Keywords}: Yang-Mills, Wiener measure, axial gauge fixing, abelian, \\
\indent \hspace{2.5cm} renormalization}

%{\bf \today}

%%%%%%%%%%%%%%%%%%%%%%%%%%%%%%%%%%%%%%%
%%%%%%%%%%%%%%%%%%%%%%%%%%%%%%%%%
%%%%%%%%%%%%%%%%%%%%%%%%%%%%%%%%%

%\tableofcontents

\section{Preliminaries}\label{s.pre}

The main goal of this article is to define a probability measure using the Yang-Mills action for an abelian gauge group, over in $\bR^4$. This is in preparation for its sequel, which we will consider the non-abelian case. The construction is using the Abstract Wiener space formalism as developed by Gross, hence is different from the construction via lattice gauge approximation. But we will have to adopt and apply renormalization techniques used in lattice gauge theory, as described in the papers written by Balaban. This construction will then be used in a sequel to this article, whereby we will define a probability measure using the Yang-Mills action in the case of a non-abelian gauge group.

This article is divided into two parts. The first part is a brief discussion on lattice gauge theory as done by Balaban. It is not our aim to give a comprehensive account of lattice gauge theory here. Instead, we will highlight certain renormalization ideas used in lattice gauge theory, which we will need to apply later on. The second half of this article starts from Section \ref{s.gm}, whereby we outline the Abstract Wiener space construction, as given in \cite{CSLim01}.

We are aware that in nature, there are no confining quarks in electromagnetism. Guth in \cite{PhysRevD.21.2291} has argued heuristically that for weak coupling, ${\rm U}(1)$ lattice gauge theory does not satisfy the Area Law formula. A rigorous proof of Guth's result is given in \cite{frohlich1982}, which is set on a discrete lattice $\mathbb{Z}^4 \subset \bR^4$. We are also aware that asymptotic freedom does not hold for abelian Yang-Mills theory, which describes the electromagnetic force.

After the construction of a probability space, we will define the Yang-Mills path integral in Section \ref{s.fi} using renormalization techniques. We will further show that the Area Law formula does not hold for the abelian Yang-Mills theory.

Our ambient space is $\bR^4$ equipped with coordinate axes, with corresponding coordinates $(x^0, x^1, x^2, x^3)$, and choose the standard Riemannian metric (Euclidean metric) on $T\mathbb{R}^4$. We will call $x^0$ as the time coordinate. Now let $T^\ast\bR^4$ denote the trivial cotangent bundle over $\bR^4$, i.e. $T^\ast \bR^4 \cong \bR^4 \times \Lambda^1(\bR^4)$ and $\Lambda^1(\bR^3)$ denote the subspace in $\Lambda^1(\bR^4)$ spanned by $\{dx^1, dx^2, dx^3\}$.

We can also define $\Lambda^2(T^\ast \bR^4)$, the trivial 2-th exterior power of the cotangent bundle over $\bR^4$. Note that $\Lambda^2(T^\ast \bR^4) \cong \bR^4 \times \Lambda^2(\bR^4)$ and it inherits an inner product $\langle \cdot, \cdot \rangle_2$ from the Riemannian metric, thus \beq \{dx^1 \wedge dx^2, dx^3 \wedge dx^1, dx^1 \wedge dx^2, dx^0 \wedge dx^1, dx^0 \wedge dx^2, dx^0 \wedge dx^3\} \nonumber \eeq is an orthonormal basis in $\Lambda^2 (T^\ast\bR^4)$. See \cite{MR1312606}.

Let $P \rightarrow \bR^4$ be a trivial complex line bundle, with structure group ${\rm U}(1)$. The Lie algebra $\mathfrak{u}(1)$ will be isomorphic to $\bR \otimes i$. Denote the group of all smooth ${\rm U}(1)$-valued mappings on $\bR^4$ by $\mathcal{G}$, called the gauge group. This gauge group consists of elements of the form $e^{i\alpha(x)}$, $\alpha$ is a smooth continuous real-valued function on $\bR^4$. For any complex scalar field $\psi(x)$, the gauge group acts on it via $ \psi(x) \mapsto e^{i\alpha(x)}\psi(x)$, whereby $x \in \bR^4$. See \cite{MR1031992}.

A connection is an $\mathfrak{u}(1)$-valued 1-form, written as $A\otimes i = \sum_{\mu=0}^3 A_\mu \otimes dx^\mu\otimes i$, and it defines a covariant derivative \beq D_\mu \psi(x) = \partial_\mu \psi(x) + A_\mu(x)\psi(x),\ \partial_\mu \equiv \frac{\partial }{\partial x^\mu}. \nonumber \eeq Each $A \otimes i = \sum_{\mu=0}^3 A_\mu \otimes dx^\mu \otimes i$ transforms under gauge transformation as follows: \beq A_\mu(x)\otimes i \mapsto [A_\mu(x) + \partial_\mu\alpha(x)]\otimes i. \label{e.g.1} \eeq

To ease the notation, we will in future drop the $i\equiv \sqrt{-1}$ from the connection. The vector space of all smooth connections will be denoted by $\mathcal{A}$.  The orbit of an element $A \in \mathcal{A}$ under this gauge transformation will be denoted by $[A]$ and the set of all orbits by $\mathcal{A}/\mathcal{G}$.

A volume form on $\bR^4$ is given by $d\omega \equiv dx^0 \wedge dx^1 \wedge dx^2 \wedge dx^3$. This allows us to define a Hodge star operator $\ast$ acting on $2$-forms, $\ast: \Lambda^2(T^\ast \bR^4) \rightarrow \Lambda^{2}(T^\ast \bR^4)$ such that for $u, v \in \Lambda^2(T^\ast \bR^4)$, we have \beq u \wedge \ast v = \langle u, v \rangle_2\ d\omega. \label{e.x.7} \eeq  An inner product on the set of smooth sections of $\Lambda^2(T^\ast \bR^4)$, $\Gamma(\Lambda^2(T^\ast \bR^4))$, is then defined as \beq \langle u, v \rangle := \int_{\bR^4} u \wedge \ast v \equiv \int_{\bR^4} \langle u, v \rangle_2\ d\omega. \nonumber \eeq

For $A \in \mathcal{A}$, the abelian Yang-Mills action is given by \beq S_{{\rm YM}}(A) = \langle dA, dA \rangle \equiv \int_{\bR^4}  \langle dA, dA\rangle_2\ d\omega. \label{e.c.1} \eeq  Note that this action is invariant under gauge transformations, given by Equation (\ref{e.g.1}).

Let $C$ be a simple closed curve in the manifold $\bR^4$. It is of interest to make sense of the following abelian Yang-Mills path integral,
\beq \frac{1}{Z}\int_{A \in \mathcal{A} /\mathcal{G}} \exp \left[  \int_{C} A\otimes i\right] e^{-\frac{1}{2}S_{{\rm YM}}(A)}\ DA, \label{e.ym.1} \eeq whereby $DA$ is some Lebesgue type of measure on the space of connections, modulo gauge transformations and \beq Z = \int_{A \in \mathcal{A} /\mathcal{G}}e^{-\frac{1}{2}S_{{\rm YM}}(A)}\ DA. \nonumber \eeq

Using axial gauge fixing, every orbit $[A] \in \mathcal{A} /\mathcal{G}$ can be gauge transformed into the form $A = \sum_{j=1}^3a_{j} \otimes  dx^j $,  subject to the conditions \beq a_{1}(0, x^1, 0, 0) = 0,\ a_{2}(0, x^1, x^2, 0)=0,\ a_{3}(0, x^1, x^2, x^3) = 0. \nonumber \eeq We will drop these restrictions and hence consider the Yang-Mills integral in Expression \ref{e.ym.1} to be over the space of smooth 1-forms of the form $A =  \sum_{j=1}^3a_{j} \otimes  dx^j $, whereby $a_{j}: \bR^4 \rightarrow \bR$ is smooth and it decays to zero as $| (x^0, x^1, x^2, x^3)| \rightarrow \infty$.

Its differential is given by
\begin{align}
dA
=&  \sum_{j=1}^3  a_{0:j} \otimes dx^0 \wedge dx^j + \sum_{1\leq i<j \leq 3}a_{i;j} \otimes  dx^i \wedge dx^j , \nonumber
\end{align}
for $a_{i;j} := \partial_i a_{j} - \partial_j a_{i} \equiv \frac{\partial}{\partial x^i}a_j - \frac{\partial}{\partial x^j}a_i$, $a_{0:j} := \partial_0 a_{j} \equiv \frac{\partial}{\partial x^0}a_j$. Hence, \beq |dA|^2 := S_{{\rm YM}}(A) =  \int_{\bR^4} \sum_{j=1}^3|a_{0:j}|^3 + \sum_{1\leq i<j \leq 3}|a_{i;j}|^2\ d\omega. \label{e.ym.3} \eeq

We want to make sense of Expression \ref{e.ym.1}, which is now of the form \beq \frac{1}{Z}\int_A e^{\int_C \sum_{j=1}^3 A_j \otimes dx^j \otimes i}e^{-|dA|^2/2} DA,\ i = \sqrt{-1}. \label{e.ym.2} \eeq

\begin{rem}\label{r.ym.1}
\begin{enumerate}
  \item Note that we impose axial gauge fixing first, before defining a path integral, which we will compute the propagator later.
  %\item Balaban considered axial gauge on a lattice in \cite{1985b}.
  \item Axial gauge fixing is used here, to define a non-degenerate inner product using the differential $d$, because we only consider the space of smooth 1-forms, which decay to zero at infinity. In \cite{rivasseau01} however, the authors only consider $\langle \partial_t \omega, \partial_t \omega\rangle$, which is degenerate. This is because the authors considered a compact ambient space, instead of in $\bR^4$.
\end{enumerate}
\end{rem}

The usual approach to define the path integral is via finite dimensional lattice gauge approximation, explained in \cite{glimm1981quantum}. But other authors have used different approaches besides using lattice approximation. The authors in \cite{2014arXiv1406.4177F} and \cite{Levy} applied white noise analysis to define a Yang-Mills Gaussian measure. The authors in \cite{GROSS198965} used stochastic calculus to define a 2-dimensional Yang-Mills measure. Also stated in \cite{douglas01}, it is not necessary to consider the lattice approximation when defining a quantum Yang-Mills theory. We also refer the reader to an article written by Chatterjee, whereby he gave references to other people who had attempted to define a Yang-Mills probability measure. See \cite{2018arXiv180301950C}.

\section{Balaban's approach to Renormalization}

Balaban did some impressive work from 1984 to 1989, by establishing ultraviolet stability in three and four-dimensional non-abelian lattice gauge theories, in a series of three papers \cite{volumeI, volumeII, volumeIII} published in 1982-83; two papers \cite{propagator1, propagator2} published in 1984; a series of five papers \cite{1985a, 1985b, 1985c, 1985d, 1985e} all published in 1985; and a series of five papers \cite{1987-99_RG-01, 1987-99_RG-02, 1987-99_RG-03, 1987-99_RG-04, 1987-99_RG-05} published from 1987 to 1989. In \cite{1985a, 1985b, 1985c, 1985d, 1985e}, Balaban considered the Yang-Mills action, but for dimension $d=3$. As remarked in \cite{douglas01}, one would need to introduce renormalization, if one uses lattice gauge approximation to define the functional integral.

To describe Balaban's approach to renormalization, we consider $d = 1$ for simplicity. He fixed a number $L > 1$ and consider a bounded lattice $[-L, L]$, with unit spacing. He partitioned it into a lattice $L^{-k}[-L, L]$, whereby each spacing will be $\epsilon = L^{-k}$. Each node in the lattice is considered as independent from its neighbors and he constructed a $2L^{k+1}+1$-dimensional integral from the lattice. By rescaling the lattice $L^{-k}[-L, L]$ to be a lattice with unit spacing, i.e. $\epsilon$ is rescaled to be unit spacing, Balaban in his series of papers showed that the sequence of finite dimensional integrals remains bounded, the bounds are independent of $\epsilon$. Thus, he showed ultraviolet stability, in the series of papers from \cite{volumeI} - \cite{1987-99_RG-05}.

In his work, it is important to note that he was giving meaning to the partition function $Z$. That is, given some action $S$, he defined the Gaussian integral \beq \int_{(\phi, A)}e^{S(\phi, A)}\ \mathcal{D}\phi\ \mathcal{D}A, \nonumber \eeq whereby $\phi$ is some scalar field, $A$ is a vector field and $\mathcal{D}\phi$ and $\mathcal{D}A$ are respectively Lebesgue measures. See \cite{volumeI, volumeII, volumeIII}, which are only applicable for dimensions $d=2, 3$. Using lattice approximation, he defined the partition function as a sequence of finite dimensional Gaussian integrals. And he showed that this sequence of integrals was bounded, with bounds independent of lattice spacing.

Among his series of papers from 1984 to 1989, only \cite{1985a}-\cite{1985e} and \cite{1987-99_RG-01}-\cite{1987-99_RG-05} are relevant to Yang-Mills gauge theory, using a compact semi-simple gauge group. The papers written from 1987 onwards were directed at dimension 4. These papers did not address the infrared divergence in the computation of the partition function. The action being considered in \cite{volumeI}-\cite{volumeIII} seemed to be directed at abelian gauge group, provided $\mu_0 = 0$ and there is no scalar field. Unfortunately, Balaban considered $\mu_0 > 0$ and a mass term was also introduced. Hence, he avoided the infrared divergence issue. Thus, these results are not useful for the Yang-Mills action using the abelian gauge group.  Finally, Balaban addressed the ultraviolet stability issue in \cite{volumeI, volumeII, volumeIII}.

\begin{rem}
In the case when $d = 2$, Brydges and his fellow authors in \cite{brydges01, brydges02, brydges03} showed that the Gaussian integral converges, without any cut-off, for $\mu_0^2 = 0$.
\end{rem}

\section{Renormalization flow}

For papers published from 1985 to 1989, Balaban would consider a bounded domain in the lattice $\epsilon \mathbb{Z}^d$, and introduced plaquette, with parallel translation between nodes. He would define gauge field configurations on the lattice and define a partition function, using a (finite) product Haar measure. Ultraviolet stability again is proved by showing that the partition function remains bounded, with bounds independent of $\epsilon$.

In contrast, we want to define a Gaussian probability measure, using the Abstract Wiener space formalism developed by Gross. The construction we are going to use comes from \cite{CSLim01}, which is based on the geometric quantization of the harmonic oscillator. However, we will use some of Balaban's renormalization ideas throughout this article in our construction.

%\begin{rem}
%Throughout this article, this Gaussian function will play a key role in our calculations.
%As mentioned earlier on, the Dirac Delta function is a generalized function, but we can approximate it using a Gaussian function $\phi_\kappa$. The larger the value of $\kappa$, the better is this approximation. The variance $1/\kappa^2$, denotes how well we can resolve a point in $\bR^4$. In our construction of the Yang-Mills path integral, we will make use of this Gaussian function in this construction, thus all our Yang-Mills path integral define later, will depend on the parameter $\kappa$.
%\end{rem}

Introduce a parameter $\kappa > 0$. In the works of Balaban, he would use $\epsilon$ to denote lattice spacing, whereby $\kappa$ is synonymous with $1/\epsilon$.
Consider the subspace $\mathcal{S}_\kappa(\mathbb{R}^4)$ inside $L^2(\bR^4)$, with the Gaussian function $\phi_\kappa$, \beq \sqrt{\phi_{\kappa}}(x) = \kappa^2 e^{-\kappa^2|x|^2/4}/(2\pi). \nonumber \eeq Any $f \in \mathcal{S}_\kappa(\mathbb{R}^4)$ can be written in the form \beq f(x) = p(x)\sqrt{\phi_{\kappa}}(x), \nonumber \eeq whereby $p$ is a polynomial in $x=(x^0, x^1, x^2, x^3)$.

The inner product $\langle \cdot, \cdot \rangle$ in question is given by \beq \langle f, g \rangle = \int_{\bR^4} f\cdot g\ d\lambda, \label{e.dr.4} \eeq $d\lambda$ is Lebesgue measure on $\mathbb{R}^4$. Let $\overline{\mathcal{S}}_\kappa(\bR^4)$ be the smallest Hilbert space containing $\mathcal{S}_\kappa(\mathbb{R}^4)$, using this inner product.

The Hermite polynomials $\{h_i\}_{i \geq 0}$ form an orthogonal set in $L^2(\mathbb{R}, d\mu)$ with respect to the Gaussian measure $d\mu(x_0) \equiv e^{-x_0^2/2}dx_0/\sqrt{2\pi}$. Let $\mathcal{P}_r $ be the set consisting of 4-tuples of the form $(i,j,k,l)$, each entry inside $\mathbb{N} \cup \{0\}$, such that the sum of its entries $i+j+k+l = r$. For $p_r = (i,j,k,l) \in \mathcal{P}_r$, define \beq H_{p_r}(x) := h_i(x^0) h_j(x^1)h_k(x^2)h_l(x^3) ,\nonumber \eeq a product of Hermite polynomials and $H_{p_r}^\kappa := H_{p_r}(\kappa\cdot)$.

We have the normalized Hermite polynomials $H_{p_r}/\sqrt{p_r!}$, $p_r! =i!j!k!l!$, with respect to the Gaussian measure $e^{-\sum_{i=0}^3|x^i|^2 /2}d\lambda/(2\pi)^2 $. Then  \beq  \bigcup_{r=0}^\infty  \left\{  H_{p_r}^\kappa \sqrt{\phi_\kappa}/\sqrt{p_r!}: p_r \in \mathcal{P}_r \right\}
\nonumber \eeq is an orthonormal basis for $\overline{\mathcal{S}}_\kappa(\bR^4)$.

\begin{defn}\label{d.shm.1}
Consider the real vector space spanned by $\{z^n:\ z \in \bC\}_{n=0}^\infty$, integrable with respect to the Gaussian measure, equipped with an inner product, given by
\begin{align}
\langle z^{r},& z^{r'}\rangle
= \frac{1}{\pi}\int_{\mathbb{C}}   z^{r} \cdot \overline{z^{r'}}e^{-|z|^2}
\ dx\ dp,\ z = x + \sqrt{-1}p.  \label{e.l.2}
\end{align}
Note that $\overline{z}$ means complex conjugate. Denote this (real) inner product space by $H^2(\bC)$, which consists of polynomials in $z$, with real coefficients.
\end{defn}

An orthonormal basis is given by \beq \left\{ \frac{z^n}{\sqrt{n!}}:\ n \geq 0 \right\}. \nonumber \eeq This inner product space $H^2(\bC)$ is the vector space for which we quantized the simple harmonic oscillator using geometric quantization. See \cite{MR1183739}. Complete it into a Hilbert space $\mathcal{H}^2(\bC)$.

Let $H^2(\bC^4) \equiv H^2(\bC)^{\otimes^4}$ and $H^2(\bC^3) \equiv H^2(\bC)^{\otimes^3}$, which consists of polynomials in $z=(z_0, z_1, z_2, z_3)$ and $(z_1, z_2, z_3)$ respectively. Write $d\lambda_4 = \frac{1}{\pi^4}e^{-\sum_{i=0}^3|z_i|^2} \prod_{i=0}^3 dx^i dp^i$ and define an inner product \beq \left\langle f, \hat{f} \right\rangle = \int_{\bC^4} f\cdot \overline{\hat{f}}\ d\lambda_4, \label{e.f.1} \eeq for $f$, $\hat{f} \in H^2(\bC^4)$. Its norm will be denoted by $\parallel \cdot \parallel$. Let $\mathcal{H}^2(\bC^4)$ and $\mathcal{H}^2(\bC^3)$ be the smallest Hilbert space containing $H^2(\bC^4)$ and $H^2(\bC^3)$ respectively. We will refer the former as the standard model of the quantum harmonic oscillator.

The Segal Bargmann transform $\Psi_\kappa$ actually maps Hermite polynomials in $\bR^4$ to complex holomorphic functions in $\bC^4$, equipped with the product Gaussian measure $\lambda_4$. See Equation (\ref{e.f.1}).

\begin{defn}(Segal Bargmann Transform)\\ \label{d.sbt}
Using the Segal Bargmann Transform $\Psi_\kappa: \overline{\mathcal{S}}_\kappa(\bR^4) \rightarrow \mathcal{H}^2(\bC^4)$, we map \beq \Psi_\kappa:  H_{p_r}(\kappa x^0, \kappa x^1, \kappa x^2, \kappa x^3)\sqrt{\phi_\kappa}/\sqrt{p_r!} \longmapsto \frac{z^{p_r}}{\sqrt{p_r}!} \equiv \frac{z_0^{i_0}z_1^{i_1}z_2^{i_2}z_3^{i_3}}{\sqrt{i_0!i_1!i_2!i_3!}}, \nonumber \eeq if $p_r = (i_0, i_2, i_2, i_3)$.
Clearly, it is an isometry.
\end{defn}

Suppose we identify $\overline{\mathcal{S}}_1(\mathbb{R}^4)$ with $\mathcal{H}^2(\bC^4)$, using $\Psi_1$. Then for each $\kappa > 0$, we can view the Segal Bargmann Transform as a map $\kappa^2 f(\kappa x) \sqrt{\phi_1}(\kappa x) \mapsto f(x) \sqrt{\phi_1}(x) $. We add an extra $\kappa^2$ so that the map is an isometry between Hilbert spaces, i.e. $\bigcup_{r \geq 0}\{[h_{p_r}(\kappa \cdot)/\sqrt{p_r!}] \sqrt{\phi_\kappa}:\ p_r \in \mathcal{P}_r \}$ remains an orthonormal basis in $\overline{\mathcal{S}}_\kappa(\mathbb{R}^4)$.

Because we may identify $\mathcal{S}_1(\mathbb{R}^4)$ with $H^2(\bC^4)$, we can view the Segal Bargmann Transform as a renormalization flow, whereby $\{\mathcal{S}_\kappa(\mathbb{R}^4):\ \kappa > 0\}$ is a continuous sequence of inner product spaces, which are transformed by $\Psi_\kappa: \mathcal{S}_\kappa(\mathbb{R}^4) \rightarrow H^2(\bC^4)$.

\begin{rem}\label{r.c.3}
\begin{enumerate}
\item This renormalization flow is analogous to the rescaling of the $\epsilon$-lattice to be a unit lattice, which is carried out repeatedly in Balaban's series of papers. See \cite{volumeII}, \cite{propagator1} and \cite{1985d}.
\item By identifying $\mathcal{S}_1(\mathbb{R}^4)$ with $H^2(\bC^4)$, we regard this as an analogue of the Fourier transform, which maps from position space to momentum space.
\item In \cite{propagator2}, Balaban went into the momentum representation, so the Gaussian integrals were defined on momentum space.
  \item Scaling is definitely evident in Balaban's work. See \cite{volumeI} and \cite{propagator1}. He used the scaling \beq \phi \longmapsto s^{-a}\phi(\cdot/s), \nonumber \eeq $a$ is some power. Here, we set $\kappa = 1/s$ and $a = 2$, i.e. $f\sqrt{\phi_1} \mapsto \kappa^2 f(\kappa \cdot)\phi_1(\kappa \cdot)$.
  \item In the physics textbook \cite{peskin1995introduction}, the transform was $x$ maps to $x/\kappa$.
\end{enumerate}
\end{rem}

\section{Gaussian Measure}\label{s.gm}

Using the inner product $\langle f, g \rangle = \int_{\bR^4} f(s)\cdot g(s)\ ds$, we want to make sense of a Gaussian integral on $L^2(\bR^4)$, of the form \beq \frac{1}{\tilde{Z}} \int_{f \in L^2(\bR^4)} e^{f(x_0)} e^{-\langle f, f \rangle/2} \mathcal{D}[f], \nonumber \eeq whereby $\mathcal{D}[f]$ is some non-existent Lebesgue measure on $L^2(\bR^4)$ and \beq \tilde{Z} = \int_{f \in L^2(\bR^4)}e^{-\langle f, f \rangle/2} \mathcal{D}[f] \nonumber \eeq is the partition function. Denote $\parallel f \parallel^2 = \langle f, f \rangle$.

Using this inner product, we will not work with the space $L^2(\bR^4)$. Instead, we will replace it with a smaller space $\overline{\mathcal{S}}_\kappa(\bR^4)$. With the above setup, we seek to define a path integral of the form \beq \frac{1}{Z}\int_{f \in \overline{\mathcal{S}}_\kappa(\mathbb{R}^4)} e^{f(x_0)} e^{-\parallel f\parallel^2/2} \mathcal{D}[f], \nonumber \eeq whereby $\frac{1}{Z}e^{-\parallel f\parallel^2/2} \mathcal{D}[f]$ is some form of Gaussian probability measure on an infinite dimensional Hilbert space $\overline{\mathcal{S}}_\kappa(\mathbb{R}^4)$.

\begin{rem}
\begin{enumerate}
  \item A finite gauge lattice with spacing $\epsilon$ is analogous to $\overline{\mathcal{S}}_\kappa(\mathbb{R}^4)$, whereby $\epsilon$ is synonymous to $1/\kappa$.
  \item Balaban seeked to obtain bounds on the partition function $Z$. In contrast, we never compute the partition function; instead we are constructing a probability measure.
\end{enumerate}
\end{rem}

Now, if we can write $f(x_0) = \langle f, \delta_{x_0} \rangle$ for some function $\delta_{x_0} \in \overline{\mathcal{S}}_\kappa(\mathbb{R}^4)$, then it would be easy to define our path integral. Unfortunately, such a function is given by a Dirac Delta function, which is a generalized function, and is not inside $\overline{\mathcal{S}}_\kappa(\mathbb{R}^4)$. But such a function exists in $\mathcal{H}^2(\bC^4)$.

\begin{defn}
Define a linear functional $\chi_w$ on $\mathcal{H}^2(\bC^4)$, $\chi_w(z) = e^{\bar{w}\cdot z}$, where \beq w = (w_0, w_1, w_2, w_3),\quad z = (z_0, z_1, z_2, z_3) \in \bC^4\quad {\rm and}\quad \bar{w} \cdot z \equiv \sum_{\alpha=0}^3 \overline{w_\alpha} z_\alpha. \nonumber \eeq Note that $\overline{w_\alpha}$ means the complex conjugate of $w_\alpha \in \bC$, and $|w|^2 = w \cdot \overline{w}$.

A direct calculation will show that for any $f \in \mathcal{H}^2(\bC^4)_\bC \equiv \mathcal{H}^2(\bC^4) \otimes_{\bR} \bC$, $\chi_w(z) \equiv e^{\bar{w}\cdot z}$ is the unique vector such that $\langle f, \chi_w \rangle = f(w)$ for any $w \in \bC^4$. Note that here, we extend $\langle \cdot, \cdot \rangle$ to be a sesquilinear complex inner product.
\end{defn}

We will term $\langle \cdot, \chi_w \rangle$ as an evaluation functional. Note that it is like the Dirac Delta distribution acting on continuous functions in $\bR^4$, except that here, $\chi_w$ is indeed a function and will play the role of a Dirac Delta function in $\mathcal{H}^2(\bC^4)$. By direct calculation, its norm is given by $\parallel \chi_w \parallel = e^{|w|^2/2}$.

Instead of making sense of the path integral over in $\overline{\mathcal{S}}_\kappa(\mathbb{R}^4)$ containing real Schwartz functions, we will now make sense over in $\mathcal{H}^2(\bC^4)$ containing holomorphic functions, via the Segal Bargmann transform. Hence, our path integral is now of the form \beq \frac{1}{\int_{f \in \mathcal{H}^2(\bC^4)} e^{-\parallel f \parallel^2/2}\ \mathcal{D}[f]}\int_{f \in \mathcal{H}^2(\bC^4)} e^{\langle f, \chi_{x_0}\rangle} e^{-\parallel f \parallel^2/2}\ \mathcal{D}[f], \nonumber \eeq whereby now $x_0 \in \bR^4 \hookrightarrow \bC^4$, i.e. we embed $\bR^4$ directly inside $\bC^4$.

\begin{rem}
\begin{enumerate}
  \item This is similar to how physicists define a path integral over in momentum space, via Fourier Transform.
  \item Using the Segal Bargmann transform as a renormalization flow is similar to how Balaban rescaled the finite gauge lattice of spacing $\epsilon$, to a finite gauge lattice of unit spacing.
  \item In the last Section \ref{s.cr}, we will compare this approach of defining a path integral, over the space containing holomporphic functions, with the approach of defining a path integral, over the space containing real Schwartz functions.
\end{enumerate}
\end{rem}

Let $H$ be a Hilbert space equipped with an inner product $\langle \cdot, \cdot \rangle$, and $P:H \rightarrow H$ be an orthogonal projection such that $PH$ is finite dimensional. Given any Borel subset $F  \subseteq PH$, define for $\theta > 0$,  \beq \mu_{\theta}\left( x \in P^{-1}(F) \right)= \left( \frac{\theta}{2\pi}\right)^{l/2}\int_{y \in F} e^{-\theta |y|^2 /2} dy , \label{e.n.2} \eeq where $l$ is the dimension of $P H$.

Unfortunately, $\mu_\theta$ does not extend to be a Gaussian measure on $H=\mathcal{H}^2(\bC^4)$, as explained in \cite{MR0461643}. We need to enlarge this Hilbert space in order to define a Gaussian measure on it.

This was done in \cite{CSLim01}, by defining a supremum norm $|\cdot|$ on $\mathcal{H}^2(\bC^4)$. For $f = \sum_{r \geq 0} \sum_{p_r \in \mathcal{P}_r} c_{p_r}\frac{z^{p_r}}{\sqrt{p_r!}} \in \mathcal{H}^2(\bC^4)$, we have \beq |f| := \sup_{z \in B(0, 1/2)}\sum_{r \geq 0}\sum_{p_r \in \mathcal{P}_r} |c_{p_r}||z^{p_r}|, \label{e.sup.1} \eeq $B(0,1/2)$ is the open ball with radius $1/2$, center $0$ in $\bC^4$.

By showing it is measurable according to Definition 4.4 in \cite{MR0461643}, we can complete the space $\mathcal{H}^2(\bC^4)$ into a Banach space containing holomorphic functions, denoted as $B(\bC^4)$.

\begin{rem}
Even though the construction in \cite{CSLim01}, was done over in $\mathcal{H}^2(\bC^3)$, the construction can be easily adapted for $\mathcal{H}^2(\bC^4)$.
\end{rem}

This Banach space $B(\bC^4)$ is then equipped with a Wiener measure $\mu_\theta$, i.e. $(B(\bC^4), \mu_\theta)$ is a probability space. The $\sigma$-algebra is the Borel field of $B(\bC^4)$.  Together with $\mathcal{H}^2(\bC^4)$ and an inclusion map $i: \mathcal{H}^2(\bC^4) \hookrightarrow B(\bC^4)$, they form a triple $(i, \mathcal{H}^2(\bC^4),B(\bC^4))$, an Abstract Wiener space, and thus we have the following containment of spaces \beq B(\bC^4)^\ast \subset \mathcal{H}^2(\bC^4) \subset B(\bC^4). \nonumber \eeq The Hilbert space is dense inside the Banach space and by abuse of notation, we identify the dual space $B(\bC^4)^\ast$, with a dense subspace contained inside the Hilbert space $\mathcal{H}^2(\bC^4)$. Indeed, this dual space is also the space of Gaussian random variables on $B(\bC^4)$. For details, refer to \cite{MR0461643}.

\begin{rem}
If one is able to map $B(\bC^4)$ back using the Segal Bargmann Transform, then we see that when we complete $\mathcal{S}_\kappa(\mathbb{R}^4)$ using a measurable norm, we get the space of generalized functions.
\end{rem}

It was shown in \cite{CSLim01} that $\chi_w$ lies in the complexified dual space $B(\bC^4)_\bC^\ast$ and the evaluation map $\phi \mapsto ( \phi, \chi_w )_\sharp:= \phi(w)$ is a complex random variable on $B(\bC^4)_\bC$. By definition of a Wiener measure $\mu_\theta$ on an Abstract Wiener space, we have
\begin{align*}
\int_{\phi \in B(\bC^4)} \phi(x)\phi(y)\ d\mu_\theta =&  \int_{\phi \in B(\bC^4)} ( \phi, \chi_x )_\sharp ( \phi, \chi_y )_\sharp\ d\mu_\theta  = \frac{1}{\theta}\langle \chi_x, \chi_y \rangle \equiv \frac{1}{\theta}e^{x \cdot y}.
\end{align*}

We computed the correlation function $e^{x\cdot y}$ between two space coordinates $x$ and $y$, both in $\bR^4$. Over the standard model of the quantum harmonic oscillator, the two distinct points are correlated. The physicists will call $e^{x\cdot y}$ as the propagator between $x$ and $y$. Note that the propagator $e^{x\cdot y}$ is finite, which is not a surprise, since we are doing it over the standard model of the quantum harmonic oscillator, which is synonymous to a finite lattice with unit spacing.

This is where we need to do a scaling. We need to identify $\bR^4$ inside $\bC^4$. Because of the renormalization flow $\Psi_\kappa$, we will map $x \in \bR^4$ into $\kappa x/2 \in \bR^4 \hookrightarrow \bC^4$. In other words, we embed $\bR^4$ inside $\bC^4$, but we scale by a factor $\kappa/2$. Thus, two distinct points $x, y \in \bR^4$, separated by a distance $|x-y|$, will now be mapped to $\kappa x/2$ and $\kappa y/2$ inside $\bC^4$ respectively, and their separation now will be $\kappa|x-y|/2$. If we relate $\kappa$ to the parameter $1/\epsilon$ in lattice gauge theory, then this embedding of $\bR^4$ inside $\bC^4$ with a scaling factor of $\kappa/2$, is analogous to the mapping of the lattice $\epsilon \mathbb{Z}^4$, each node separated by $\epsilon$ unit, to the standard lattice $\mathbb{Z}^4$, neighboring nodes separated by one unit.
Note that rescaling the lattice spacing will give us a renormalization transformation.

The correlation between distinct points $x$ and $y$ in $\bR^4$ or their propagator, will now be $\exp[\kappa^2 x\cdot y/4]$, which goes to infinity, unless $x\cdot y$ is non-positive, when $\kappa \rightarrow \infty$. This is nothing new, as the propagator in physics also has this problem. This is due to the divergent integrals that one obtains in Feynmann diagrams when computing the propagators. The reason for this divergence is because of the ultra-violet divergence issue.

\begin{rem}\label{r.p.1}
In \cite{volumeIII}, Balaban used Feynman diagrams to compute propagators.
\end{rem}

\begin{notation}\label{n.n.5}
Let $\psi_w = \psi(w) = e^{-|w|^2/2}/\sqrt{2\pi}$.
\end{notation}

In Quantum Field Theory, renormalization requires one to introduce a cutoff function in momentum space, so as to resolve this ultra-violet divergence issue, as was done in \cite{rivasseau01, peskin1995introduction}. Since we never use Fourier Transform to go into momentum space, we will instead normalize the evaluation functional $\chi_w$, by multiplying it by a factor $\psi_w = e^{-|w|^2/2}/\sqrt{2\pi}$, which is less than 1. There are two important reasons for doing this.

Now, at $w=0$, note that $\chi_0 = 1$ and its norm is $1$. But for $w \neq 0$, we have $\chi_w = e^{\bar{w}\cdot z}$ and its norm is $e^{|w|^2/2}$, so it is not normalized. Because $\bR^4$ is isotropic, i.e. there should be no preferred position in space, we should have that $\chi_w$ has the same norm, but this is not the case. Henceforth, we normalize $\chi_z$ and consider the normalized functional $\psi_z \chi_z$. Then, \beq \frac{1}{2\pi}\langle e^{-|w|^2/2}\chi_w, e^{-|z|^2/2} \chi_z \rangle = \frac{1}{2\pi}e^{-|z|^2/2} e^{-|w|^2/2} e^{\bar{w}z}. \nonumber \eeq

In particular, for real $x$ and $y$, the propagator becomes
\begin{align*}
\frac{1}{2\pi}\langle e^{-\kappa^2|x|^2/8}\chi_{\kappa x/2}, e^{-\kappa^2|y|^2/8} \chi_{\kappa y/2} \rangle =& \frac{1}{2\pi}e^{-\kappa^2|x|^2/8} e^{-\kappa^2|y|^2/8} e^{\kappa^2 x\cdot y/4} \\
=& \frac{1}{2\pi}e^{-\kappa^2|x-y|^2/8},
\end{align*}
after renormalization.

The second reason is this. The Dirac Delta function is not a true function. However, at the space point $x$, we can approximate it by a Gaussian function $(p_\kappa^x)^2$, which is denoted as $p_\kappa^x \equiv \kappa^2\exp[-\kappa^2|x-\cdot|^2/4]/(2\pi)$. Now, under the Segal Bargmann Transform, one can show that \beq \Psi_\kappa: \frac{1}{\sqrt{2\pi}}p_\kappa^x \longmapsto \chi_{\kappa x/2} \psi_{\kappa x/2}. \nonumber \eeq See \cite{CSLim01}. Since $\Psi_\kappa$ is an isometry, we have
\begin{align*}
\frac{1}{2\pi}\langle p_\kappa^x, p_\kappa^y \rangle =& \psi_{\kappa x/2} \psi_{\kappa y/2}\langle \chi_{\kappa x/2}, \chi_{\kappa y/2} \rangle \\
=& \psi_{\kappa x/2} \psi_{\kappa y/2}\exp[\kappa^2 x\cdot y/4]
= \frac{1}{2\pi}\exp[-\kappa^2|x-y|^2/8].
\end{align*}

The plan now is to make the separation between $\kappa x/2$ and $\kappa y/2$ go to infinity, by taking $\kappa$ go to infinity. This is analogous to taking the limit as the lattice spacing $\epsilon$ goes down to zero in lattice approximation, so that $\epsilon \mathbb{Z}^4$ will give us $\bR^4$. As $\kappa$ goes to infinity, the `renormalized' propagator goes to 0, if $x \neq y$. Thus any two distinct points become uncorrelated.

\section{Wiener space}\label{s.wm}

The Hermite polynomials satisfy the following property \beq h_{n+1}(x) = xh_n(x) - h_n'(x). \nonumber \eeq Therefore, \beq \frac{d}{dx}\left( h_n(x) e^{-x^2/4} \right) = \left[ xh_n(x) - h_{n+1}(x) - \frac{x}{2}h_n(x) \right] e^{-x^2/4}. \nonumber \eeq But we also have $h_n' = nh_{n-1}$, which means that $xh_n(x) = h_{n+1}(x) + nh_{n-1}(x)$. Thus,
\begin{align}
\frac{d}{dx}\left( h_n(x) e^{-x^2/4} \right) =& \left[ \frac{1}{2}h_{n+1}(x) + \frac{n}{2}h_{n-1}(x) - h_{n+1}(x) \right] e^{-x^2/4} \nonumber \\
=& \left[\frac{n}{2}h_{n-1}(x)-\frac{1}{2}h_{n+1}(x) \right]e^{-x^2/4}. \nonumber
\end{align}

\begin{defn}
Define a linear operator $\mathfrak{d}_a: H^2(\bC) \rightarrow H^2(\bC)$ by \beq \mathfrak{d}_a z_a^n= \frac{n}{2}z_a^{n-1} - \frac{1}{2}z_a^{n+1}, \label{e.x.3} \eeq for $a = 0, 1, 2, 3$. Extend it to be an operator on $H^2(\bC^4)$, by \beq \mathfrak{d}_a\left[z_a^p \prod_{b \neq a \atop b=0,\cdots 3} z_b^{q_b} \right] = \left[\frac{p}{2}z_a^{p-1} - \frac{1}{2}z_a^{p+1} \right]\cdot \prod_{b \neq a \atop b=0,\cdots 3} z_b^{q_b},\ \ p, q_b \in \mathbb{N} \cup \{0\}. \nonumber \eeq We denote the range of $\fd_a$ by $\fd_a H^2(\bC^4)$.
\end{defn}

\begin{prop}\label{p.p.1}
For any $f \in H^2(\bC)$, we have for some positive $C$, \beq \langle f, f \rangle \leq C \langle \fd_a f, \fd_a f \rangle. \label{e.a.2} \eeq
\end{prop}

\begin{proof}
By definition of $\fd_a$, we note that for any $n \geq 0$, \beq \fd_a \frac{z_a^n}{\sqrt{n!}} = \frac{\sqrt{n}}{2}\frac{z_a^{n-1}}{\sqrt{(n-1)!}} - \frac{\sqrt{n+1}}{2}\frac{z_a^{n+1}}{\sqrt{(n+1)!}}. \label{e.z.1} \eeq Using the fact that \beq \left\langle z_a^n/\sqrt{n!}, z_a^m/\sqrt{m!} \right\rangle =
\left\{
  \begin{array}{ll}
    1, & \hbox{$m=n$;} \\
    0, & \hbox{$m\neq n$,}
  \end{array}
\right.
 \nonumber \eeq we have
\begin{align*}
\left\langle \fd_a z_a^n/\sqrt{n!}, \fd_a z_a^n/\sqrt{n!} \right\rangle =& \frac{n}{4} + \frac{n+1}{4} = \frac{2n+1}{4}, \\
\left\langle \fd_a z_a^n/\sqrt{n!}, \fd_a z_a^{n+2}/\sqrt{(n+2)!} \right\rangle =& -\frac{\sqrt{n+1}\sqrt{n+2}}{4}.
\end{align*}
Let $H_e = {\rm span\ of}\ \{z_a^{m}:\ m\ {\rm even}\}$ and $H_o = {\rm span\ of}\ \{z_a^{m}:\ m\ {\rm odd}\}$. Note that $\langle \fd_a f, \fd_a g \rangle = 0$, if $f \in H_e$ and $g \in H_o$. So, it suffices to show that the result holds on $H_e$ and $H_o$ separately. But note that $\langle \fd_a z_a^m, \fd_a z_a^{m+2}\rangle \neq 0$.

Let $f_e(z_a) = \sum_{r=0}^\infty c_{2r} z_a^{2r}/\sqrt{(2r)!} \in H^2(\bC)$, whereby $c_{2r} \in \bR$. Note that \beq \parallel f_e \parallel^2 = \sum_{r=0}^\infty c_{2r}^2 = \sum_{r=0}^\infty c_{4r}^2 + \sum_{r=0}^\infty c_{2(2r+1)}^2, \nonumber \eeq so without loss of generality, we assume that $\sum_{r=0}^\infty c_{4r}^2 \geq \parallel f_e \parallel^2/2$.

Let $\theta$ be the angle between $\fd_a[z_a^{4r}]/\sqrt{(4r)!}$ and $\fd_a[z_a^{4r+2}]/\sqrt{(4r+2)!}$. Then, \beq \cos \theta = -\frac{\sqrt{4r+1}\sqrt{4r+2}}{\sqrt{8r+1}\sqrt{8r+5}}. \nonumber \eeq Note that for any $r=0,1,2,\cdots$, we have \beq \frac{\sqrt{4r+1}\sqrt{4r+2}}{\sqrt{8r+1}\sqrt{8r+5}} \leq \frac{\sqrt2}{\sqrt5}. \nonumber \eeq Furthermore, \beq \lim_{r \rightarrow \infty}\frac{\sqrt{4r+1}\sqrt{4r+2}}{\sqrt{8r+1}\sqrt{8r+5}} = \frac{1}{2}. \nonumber \eeq

Let $P_+[z_a^{4r}/\sqrt{(4r)!}]$ be the projection of $\fd_a[z_a^{4r}]/\sqrt{(4r)!}$ on the span of $\{\fd_a[z_a^{4r+2}]\}$. Then, \beq \parallel P_+[z_a^{4r}/\sqrt{(4r)!}] \parallel \leq \sqrt{2/5}\parallel \fd_a[z_a^{4r}]/\sqrt{(4r)!} \parallel.  \nonumber \eeq

Similarly, if we let $P_-[z_a^{4r}/\sqrt{(4r)!}]$ be the projection of $\fd_a[z_a^{4r}]/\sqrt{(4r)!}$ on the span of $\{\fd_a[z_a^{4r-2}]\}$, then \beq \parallel P_-[z_a^{4r}/\sqrt{(4r)!}] \parallel \leq \sqrt{2/5}\parallel\fd_a[ z_a^{4r}]/\sqrt{(4r)!} \parallel. \nonumber \eeq

Let $H_+$ be the span of $\{\fd_a z_a^{4r+2}:\ r=0, 1, 2, \cdots \}$ and $P_\perp[z_a^{4r}/\sqrt{(4r)!}]$ be the projection of $\fd_a[z_a^{4r}]/\sqrt{(4r)!}$ on the orthogonal complement of $H_+$. Since $P_+[z_a^{4r}/\sqrt{(4r)!}]$ and $P_-[z_a^{4r}/\sqrt{(4r)!}]$ are orthogonal, we see that \beq \parallel P_\perp[z_a^{4r}/\sqrt{(4r)!}] \parallel \geq \sqrt{1 - (\sqrt{2/5})^2- (\sqrt{2/5})^2}\parallel \fd_a[z_a^{4r}]/\sqrt{(4r)!}\parallel = \frac{1}{\sqrt{5}}\sqrt{\frac{8r+1}{4}}. \nonumber \eeq

Because $\{ \fd_a[z_a^{4r}]:\ r=0, 1, 2, \cdots\}$ is an orthogonal set, therefore
\begin{align*}
 \parallel \fd_af_e \parallel^2 \geq& \left\Vert P_\perp\left[\sum_{r=0}^\infty c_{4r} z_a^{4r}/\sqrt{(4r)!} \right] \right\Vert^2 \geq \frac{1}{5} \sum_{r=0}^\infty c_{4r}^2 \frac{8r+1}{4} \\
\geq& \frac{1}{20} \sum_{r=0}^\infty c_{4r}^2
\geq \frac{1}{20}\frac{\parallel f_e \parallel^2}{2}.
\end{align*}

Hence, \beq \parallel f_e \parallel \leq \sqrt{40}\parallel \fd_a f_e \parallel. \nonumber \eeq Let $f_o(z_a) = \sum_{r=0}^\infty c_{2r+1} z_a^{2r+1}/\sqrt{(2r+1)!} \in H^2(\bC)$, whereby $c_{2r+1} \in \bR$. A similar argument will also show that \beq \parallel f_o \parallel \leq \sqrt{40}\parallel \fd_a f_o \parallel. \nonumber \eeq Let $f(z_a) = \sum_{r=0}^\infty c_{r} z_a^{r}/\sqrt{r!} \in H^2(\bC)$, which we will write as $f = f_e + f_o$, $f_e \in H_e$ and $f_o \in H_o$. Since $H_e$ and $H_o$ are orthogonal, we thus have
\begin{align*}
\parallel f \parallel^2 &= \parallel f_e \parallel^2 + \parallel f_o\parallel^2 \leq \left(\sqrt{40}\right)^2\parallel \fd_a f_e \parallel^2 + \left(\sqrt{40}\right)^2\parallel \fd_a f_o \parallel^2 \\
&= \left(\sqrt{40}\right)^2\parallel \fd_a f \parallel^2.
\end{align*}

%Thus, it follows that $|\langle A, A \rangle|^{1/2} \leq \sqrt{40}|\fd A|$, if $A = f \otimes dx^a$ for $f \in H^2(\bC^4)$.
\end{proof}

Let us comment on Equation (\ref{e.a.2}). In general, this inequality does not hold for any given function $f$ and its derivative, both in $L^2(\bR)$. When one takes Fourier transform as in \cite{rivasseau01}, one sees that we have the term $\langle p_0^2 \hat{f}, \hat{f} \rangle$ in the Yang-Mills action. This is where the problem comes in, as this means that the propagator computed in Feynmann diagrams, will suffer from infrared divergence. But if we restrict the connection to be inside the space $\mathcal{S}_\kappa(\bR^4)\otimes \Lambda^1(\bR^3)$, Proposition \ref{p.p.1} will imply that the (abelian) Yang-Mills action given in Equation (\ref{e.ym.3}) is non-degenerate. Refer back to Remark \ref{r.ym.1}.

We wish to qualify that Equation (\ref{e.a.2}) does not imply that there is a mass gap, a millennium problem described in \cite{jaffe2006quantum}. In \cite{Araki:1962zhd}, the authors showed that the force follows an inverse square law decay, if there is no mass gap. As abelian Yang-Mills theory describe the electromagnetic field theory, hence there cannot be a mass gap.

\begin{notation}\label{n.d.1}
Recall we defined a measurable norm $|\cdot|$ on $B(\bC^4)$ in Equation (\ref{e.sup.1}). Consider the subspace inside $B(\bC^4)$ such that \beq \{ x \in B(\bC^4):\ |\fd_0 x| < \infty\}, \nonumber \eeq and close this subspace denoted by $B_0(\bC^4)$, using the norm $\parallel x \parallel := |x| + |\fd_0 x|$. Note that $H^2(\bC^4)$ is dense inside it.
\end{notation}

From the definition of the norm $\parallel \cdot \parallel$, we see that $\fd_0: B_0(\bC^4) \rightarrow B(\bC^4)$ is a bounded linear operator.

\begin{defn}
Write $f = \sum_{r \geq 0} \sum_{p_r \in \mathcal{P}_r} c_{p_r}\frac{z^{p_r}}{\sqrt{p_r!}} \in \mathcal{H}^2(\bC^4)$. %Let $\mathcal{H}_0^2(\bC^4) \subset \mathcal{H}^2(\bC^4)$ be the Hilbert space such that $f \in \mathcal{H}_0^2(\bC^4)$ iff \beq \left\langle \sum_{r \geq 0} \sum_{p_r \in \mathcal{P}_r} \frac{\sqrt i}{2}c_{p_r}\frac{z^{p_r}}{\sqrt{p_r!}}, \sum_{r \geq 0} \sum_{p_r \in \mathcal{P}_r} \frac{\sqrt i}{2}c_{p_r}\frac{z^{p_r}}{\sqrt{p_r!}} \right\rangle < \infty, \nonumber \eeq whereby $p_r = (i,j,k,l) \in \mathcal{P}_r$.

For $p_r = (i,j,k,l)$, write $p_r^\pm = (i\pm1, j, k, l)$. Introduce a norm $| \cdot |_0$, given by \beq |f|_0 := \sup_{z \in B(0, 1/2)}\sum_{r \geq 0}\sum_{(i,j,k,l)\equiv p_r \in \mathcal{P}_r} \left[\frac{\sqrt i}{2}|z^{p_r^-}|+\frac{\sqrt{i+1}}{2}|z^{p_r^+}|\right]|c_{p_r}|, \label{e.sup.2} \eeq whereby
$B(0,1/2)$ is the open ball with radius $1/2$, center $0$ in $\bC^4$.
\end{defn}

\begin{prop}\label{p.p.2}
The norm $\parallel \cdot \parallel$ on $B_0(\bC^4)$ is a measurable norm.
\end{prop}

\begin{proof}
Recall we defined a norm $|\cdot|$ on $B(\bC^4)$. Consider the span  \beq S(N_0):= {\rm span}\ \left\{z_0^i z_1^j z_2^k z_3^l:\ i+j + k+l \leq N_0 \right\} \subset \mathcal{H}^2(\bC^4). \nonumber \eeq

Let $\epsilon > 0$. %By definition of $\fd_0$, we see that for any vector $y \in \mathcal{H}^2(\bC^4)$ orthogonal to $S(N_0 + 1)$, we see that $\fd_0 y$ is orthogonal to $S(N_0)$, provided $\langle \fd_0 y, \fd_0y \rangle < \infty$.
The proof of Proposition 2.6 in \cite{CSLim01} can be used to show that both $|\cdot|$ and $|\cdot|_0$ are measurable norms, by choosing a $N_0 \in \mathbb{N}$ large such that for any finite dimensional orthogonal projection $P: \mathcal{H}^2(\bC^4) \rightarrow \mathcal{H}^2(\bC^4)$, with the range of $P \mathcal{H}^2(\bC^4)$ being finite dimensional and orthogonal to $S(N_0+1)$, we have that \beq \mu_\theta\left\{x \in \mathcal{H}^2(\bC^4):\ |Px| > \epsilon\right\} < \epsilon,\quad {\rm and} \quad \mu_\theta\left\{x \in \mathcal{H}^2(\bC^4):\ |Px|_0 > \epsilon\right\} < \epsilon. \nonumber \eeq
Refer to Equation (\ref{e.n.2}).

Write for $\alpha = 0, 1, 2, 3$, \beq \mathcal{P}_r^\alpha = \left\{(i,j,k,l):\ i = \alpha + 4r, \ r, j,k,l \in \mathbb{N} \cup \{0\} \right\}. \nonumber \eeq For $x \in B_0(\bC^4)$, write $x_e^-$ and $x_e^+$ to be its orthogonal projections $P_e^{-}$ and $P_e^{+}$,
onto the span of $\{z_0^{4r}z_1^jz_2^kz_3^l: r,j,k,l \in \mathbb{N} \cup \{0\}\}$ and $\{z_0^{4r+2}z_1^jz_2^kz_3^l: r,j,k,l \in \mathbb{N} \cup \{0\}\}$ respectively. Similarly, write $x_o^-$ and $x_o^+$ to be its orthogonal projections $P_o^{-}$ and $P_o^{+}$, onto the span of $\{z_0^{4r+1}z_1^jz_2^kz_3^l: r,j,k,l \in \mathbb{N} \cup \{0\}\}$ and $\{z_0^{4r+3}z_1^jz_2^kz_3^l: r,j,k,l \in \mathbb{N} \cup \{0\}\}$ respectively.

Refer to Equation (\ref{e.sup.2}). If $x = \sum_{r \geq 0} \sum_{p_r \in \mathcal{P}_r} c_{p_r}\frac{z^{p_r}}{\sqrt{p_r!}}$, then we have from Equation (\ref{e.z.1}),
\begin{align*}
|\fd_0 x_e^-| = \sup_{z \in B(0, 1/2)}\sum_{r \geq 0}\sum_{p_r \in \mathcal{P}_r^0} \left[\frac{\sqrt i}{2}|z^{p_r^-}|+\frac{\sqrt{i+1}}{2}|z^{p_r^+}|\right]|c_{p_r}| \equiv |P_e^-x|_0, \\
|\fd_0 x_o^-| = \sup_{z \in B(0, 1/2)}\sum_{r \geq 0}\sum_{p_r \in \mathcal{P}_r^1} \left[\frac{\sqrt i}{2}|z^{p_r^-}|+\frac{\sqrt{i+1}}{2}|z^{p_r^+}|\right]|c_{p_r}| \equiv |P_o^-x|_0, \\
|\fd_0 x_e^+| = \sup_{z \in B(0, 1/2)}\sum_{r \geq 0}\sum_{p_r \in \mathcal{P}_r^2} \left[\frac{\sqrt i}{2}|z^{p_r^-}|+\frac{\sqrt{i+1}}{2}|z^{p_r^+}|\right]|c_{p_r}| \equiv |P_e^+ x|_0, \\
|\fd_0 x_o^+| = \sup_{z \in B(0, 1/2)}\sum_{r \geq 0}\sum_{p_r \in \mathcal{P}_r^3} \left[\frac{\sqrt i}{2}|z^{p_r^-}|+\frac{\sqrt{i+1}}{2}|z^{p_r^+}|\right]|c_{p_r}| \equiv |P_o^+ x|_0.
\end{align*}

Hence, for any finite dimensional orthogonal projection $P$, whose finite dimensional range is orthogonal to $S(N_0+1)$, we see that
\begin{align*}
\mu_\theta\left\{ \parallel Px \parallel > \epsilon \right\} =& \mu_\theta\left\{ | Px | + |\fd_0 Px| > \epsilon \right\} \leq \mu_\theta\left\{ | Px | > \epsilon/2 \right\} + \mu_\theta\left\{  |\fd_0 Px| > \epsilon/2 \right\} \\
\leq& \mu_\theta\left\{ | Px | > \epsilon/2 \right\} + \mu_\theta\left\{  |\fd_0 P_o^+Px| > \epsilon/8 \right\}+ \mu_\theta\left\{  |\fd_0 P_o^-Px| > \epsilon/8 \right\} \\
&+ \mu_\theta\left\{  |\fd_0 P_e^+Px| > \epsilon/8 \right\} + \mu_\theta\left\{  |\fd_0 P_e^-Px| > \epsilon/8 \right\} \\
=& \mu_\theta\left\{ | Px | > \epsilon/2 \right\} + \mu_\theta\left\{  |P_o^+Px|_0 > \epsilon/8 \right\}+ \mu_\theta\left\{  |P_o^-Px|_0 > \epsilon/8 \right\} \\
&+ \mu_\theta\left\{  |P_e^+Px|_0 > \epsilon/8 \right\} + \mu_\theta\left\{  |P_e^-Px|_0 > \epsilon/8 \right\} \\
\leq& \mu_\theta\left\{ | Px | > \epsilon/2 \right\} + 4\mu_\theta\left\{  |Px|_0 > \epsilon/8 \right\} \\
\leq& \frac{\epsilon}{2} + 4\frac{\epsilon}{8} = \epsilon.
\end{align*}
\end{proof}

\begin{defn}\label{d.wm.1}
Using the Abstract Wiener space formalism, we can define a Wiener measure on $(B_0(\bC^4), \parallel \cdot \parallel)$, which we will denote by $\mu_\theta$, with variance $1/\theta$. Note that $B_0(\bC^4)$ is dense inside $B(\bC^4)$.
\end{defn}

\begin{prop}\label{p.p.3}
We have that $1 \in \fd_0 B_0(\bC^4)$.
\end{prop}

\begin{proof}
We need to show the existence of a $g \in B_0(\bC^4)$ such that $\fd_0 g = 1$ and $\parallel g \parallel < \infty$. Now write $g = \sum_{n=0}^\infty c_{2n+1}z_0^{2n+1}$. By definition of $\fd_0$, we have that \beq \fd_0 g = \frac{1}{2}c_1 + \sum_{n=1}^\infty \frac{1}{2}\left[(2n+1)c_{2n+1} - c_{2n-1} \right]z_0^{2n} = 1. \nonumber \eeq Thus, we must have that $c_1 = 2$ and the recursive relation $c_{2n+1} = \frac{1}{2n+1}c_{2n-1}$ for $n \geq 1$. Using an induction argument, we see that \beq c_{2n+1} = \frac{1}{2n+1}\frac{1}{2n-1}\cdots \frac{2}{3},\ n \geq 1. \nonumber \eeq It is clear that $\sqrt{(2n+1)!}c_{2n+1} \leq 2$ for all $n \in \mathbb{N}$. Hence, \beq \parallel g \parallel = |g| + |\fd_0 g| \leq 1 + 2\sum_{n=0}^\infty \frac{1}{2^{2n+1}} < \infty. \nonumber \eeq Hence the proof.
\end{proof}

\begin{cor}\label{c.w.5}
We have that $H^2(\bC^4) \subset \fd_0 B_0(\bC^4)$.
\end{cor}

\begin{proof}
From the definition of $\fd_0$, it is clear that \beq {\rm span}\ \left\{  z_0^{2i+1} z_1^j z_2^k z_3^l:\ i,j,k,l \in \mathbb{N} \cup \{0\} \right\} = {\rm span}\ \left\{\fd_0 z_0^{2i} z_1^j z_2^k z_3^l:\ i,j,k,l \in \mathbb{N}\cup \{0\} \right\} , \nonumber \eeq which is a subspace in $\fd_0 B_0(\bC^4)$. And we also have that \beq {\rm span}\ \{1\} + {\rm span}\ \left\{ \fd_0 z_0^{2n-1}:\ n \in \mathbb{N} \right\} = {\rm span}\ \left\{ z_0^{2n}:\ n \in \mathbb{N}\cup\{0\} \right\}. \nonumber \eeq Hence, we have that \beq {\rm span}\ \left\{ z_0^{i} z_1^j z_2^k z_3^l:\ i,j,k,l \in \mathbb{N}\cup \{0\} \right\} \subset \fd_0 B_0(\bC^4), \nonumber \eeq thus $H^2(\bC^4) \subset \fd_0 B_0(\bC^4)$.
\end{proof}

\begin{rem}\label{r.d.1}
From this, we can see that $H^2(\bC^4)$ is a dense subspace inside $\fd_0 B_0(\bC^4)$. In fact, the dense inner product space we need to consider is $H^2(\bC^3) + \fd_0 H^2(\bC^4)$, which is dense inside $\fd_0 B_0(\bC^4)$. Recall $H^2(\bC^3)$ consists of polynomials of $(z_1, z_2, z_3)$.

Complete the inner product space  $\left(H^2(\bC^3) + \fd_0 H^2(\bC^4), \langle \cdot, \cdot \rangle\right)$ into a Hilbert space, denoted as $\mathcal{H}^2(\bC^3) + \overline{\fd_0 H^2(\bC^4)} \subseteq \mathcal{H}^2(\bC^4)$. Note that for $y \in \fd_0 H^2(\bC^4)$, $\langle y, y \rangle \equiv \langle \fd_0 x, \fd_0 x \rangle$ for some unique $x \in H^2(\bC^4)$, the inner product given in Equation (\ref{e.f.1}).
\end{rem}

\begin{prop}
The map $\fd_0: B_0(\bC^4) \rightarrow B(\bC^4)$ is a bounded and injective map.
\end{prop}

\begin{proof}
We already stated that it is bounded. Suppose $\fd_0 x = 0$ for $x \in B_0(\bC^4)$. Since $H^2(\bC^4)$ is dense inside $B_0(\bC^4)$, we can find a sequence $\{x_n \in H^2(\bC^4)\}_{n=0}^\infty$ such that $\parallel x_n-x \parallel \rightarrow 0$, as $n \rightarrow \infty$. Since $\fd_0$ is continuous on $B_0(\bC^4)$, we see that $\fd_0 x_n \rightarrow 0$ as $n \rightarrow \infty$. But Equation (\ref{e.a.2}) will then imply that $x_n \rightarrow 0$ as $n \rightarrow \infty$. Hence $x = 0$.
\end{proof}

\begin{defn}\label{d.wm.2}
Because $\fd_0$ is an injective map, define a norm on $\fd_0 B_0(\bC^4) \subset B(\bC^4)$ by \beq \parallel \fd_0 x \parallel := | \fd_0 x | + |x|,\ x \in B_0(\bC^4). \nonumber \eeq Hence, $(\fd_0 B_0(\bC^4), \parallel \cdot \parallel)$ is also a Banach space containing $\mathcal{H}^2(\bC^3) + \overline{\fd_0 H^2(\bC^4)}$. Note that $\fd_0$ is now an isometry map.
\end{defn}

\begin{prop}\label{p.w.1}
We have that $\parallel \cdot \parallel$ on $\fd_0 B_0(\bC^4)$ is also measurable, hence we can equip $(\fd_0 B_0(\bC^4), \parallel \cdot \parallel)$ with a Wiener measure. By abuse of notation, we will still use the same symbol $\mu_\theta$ to denote this Wiener measure, with variance $1/\theta$.
\end{prop}

\begin{proof}
Consider the span  \beq S(N_0):= {\rm span}\ \left\{z_0^i z_1^j z_2^k z_3^l:\ i+j + k+l \leq N_0 \right\} \subset \mathcal{H}^2(\bC^4). \nonumber \eeq In Proposition \ref{p.p.3}, we showed that there is a $g \in B_0(\bC^4)$ such that $\fd_0 g = 1$. Now for any $f = \sum_{r=0}^\infty \sum_{p_r \in \mathcal{P}_r}c_{p_r}\frac{z^{p_r}}{\sqrt{p_r!}}$ such that $\langle \fd_0 f, \fd_0 f \rangle = 1$,  \beq \langle f, f \rangle \leq C\langle \fd_0 f, \fd_0 f\rangle = C \nonumber \eeq from Equation (\ref{e.a.2}) will imply that $|c_{p_r}|^2 \leq C$. Let $\tilde{H} = \overline{(H^2(\bC^4), \langle \fd_0\cdot, \fd_0\cdot\rangle)}$ be the Hilbert space containing $H^2(\bC^4)$, using the inner product $(f,g) \mapsto \langle \fd_0 f, \fd_0 g \rangle$, for $f, g \in H^2(\bC^4)$.

Let $\epsilon > 0$. We can modify the proofs of Proposition 2.6 in \cite{CSLim01} and  Proposition \ref{p.p.2}, to show that by choosing $N_0$ large enough, for any finite dimensional orthogonal projection $\bar{P}: \tilde{H} \rightarrow \tilde{H}$ and $\bar{P}\tilde{H}$ is orthogonal to $S(N_0+1)$, we have using Equation (\ref{e.n.2}), \beq \nu_\theta\left\{x \in \tilde{H}:\ \parallel \bar{P}x \parallel > \epsilon \right\} = \nu_\theta\left\{x \in \tilde{H}:\ | \bar{P}x | + |\fd_0 \bar{P}x| > \epsilon \right\} < \epsilon. \nonumber \eeq Similarly, for any finite dimensional orthogonal projection $P: \mathcal{H}^2(\bC^3) \rightarrow \mathcal{H}^2(\bC^3)$ and $P\mathcal{H}^2(\bC^3)$ orthogonal to $S(N_0)$, we have using Equation (\ref{e.n.2}), \beq \mu_\theta\left\{x \in (\mathcal{H}^2(\bC^3), \langle \cdot, \cdot \rangle):\  | Px |> \epsilon \right\} < \epsilon. \nonumber \eeq

Write $H = \mathcal{H}^2(\bC^3) + \overline{\fd_0 H^2(\bC^4)}$, and $P: H \rightarrow H$ be any finite dimensional orthogonal projection, with the range $PH$ being orthogonal to $S(N_0)$. We see that for any $y = y_1 + y_2 \in P\mathcal{H}^2(\bC^3) + P\overline{\fd_0 H^2(\bC^4)}$, we can find a finite dimensional orthogonal projection $\bar{P}: \tilde{H} \rightarrow \tilde{H}$, such that $y_2 = \fd_0 x_2$ for an unique $x_2 \in \bar{P}\tilde{H}$. It is not difficult to see that \beq \nu_\theta\left\{x \in \tilde{H}:\ | \bar{P}x | + |\fd_0 \bar{P}x| > \epsilon \right\} = \mu_\theta\left\{\fd_0  x_2 \in \mathcal{H}^2(\bC^4):\ |\bar{P}x_2| + |\fd_0\bar{P}x_2| > \epsilon\right\}.\nonumber \eeq
Since $Py_2 = \fd_0 \bar{P}x_2 \perp S(N_0)$, therefore $\bar{P}x_2 \perp S(N_0 + 1)$ is unique, and
\begin{align*}
\mu_\theta&\left\{y \in H:\ \parallel Py \parallel > \epsilon\right\} \\
\leq& \mu_\theta\left\{\fd_0 x_2 \in \mathcal{H}^2(\bC^4):\ |\bar{P}x_2| + |\fd_0\bar{P}x_2| > \epsilon/2\right\} \\
&+ \mu_\theta\left\{x_1 \in \mathcal{H}^2(\bC^3):\ |g \otimes Px_1| + |Px_1| > \epsilon/2\right\} \\
\leq& \nu_\theta\left\{ x_2 \in \tilde{H}:\ |\bar{P}x_2| + |\fd_0\bar{P}x_2| > \epsilon/2\right\} \\
&+ \mu_\theta\left\{x_1 \in \mathcal{H}^2(\bC^3):\ |g| |Px_1| + |Px_1| > \epsilon/2\right\} \\
\leq& \frac{\epsilon}{2} + \mu_\theta\left\{x_1 \in \mathcal{H}^2(\bC^3):\ (1+|g|) |Px_1| > \epsilon/2\right\} \leq \frac{\epsilon}{2} + \frac{\epsilon}{2(1 + |g|)} < \epsilon.
\end{align*}
\end{proof}

\section{Segal Bargmann Transform}\label{s.bgt}

\begin{notation}
Recall we use the standard metric on $T\bR^4$ and the volume form on $\bR^4$ is given by $d\omega = dx^0 \wedge dx^1 \wedge dx^2 \wedge dx^3$. Using the Hodge star operator and the above volume form, we will define an inner product on $\mathcal{S}_\kappa(\bR^4)\otimes \Lambda^2(\bR^4)$ by \beq \left\langle \sum_{0\leq a< b \leq3} g_{ab} \otimes dx^a\wedge dx^b, \sum_{0\leq a< b \leq3} \hat{g}_{ab} \otimes dx^a\wedge dx^b \right\rangle = \sum_{0\leq a< b \leq3} \left\langle g_{ab}, \hat{g}_{ab} \right\rangle, \nonumber \eeq $g_{ab}$ and $\hat{g}_{ab} \in \mathcal{S}_\kappa(\bR^4)$.

We will further define an inner product on $H^2(\bC^4)\otimes \Lambda^2(\bR^4)$ by \beq \left\langle \sum_{0\leq a< b \leq3} f_{ab} \otimes dx^a\wedge dx^b, \sum_{0\leq a< b \leq3} \hat{f}_{ab} \otimes dx^a\wedge dx^b \right\rangle = \sum_{0\leq a< b \leq3} \left\langle f_{ab}, \hat{f}_{ab} \right\rangle. \nonumber \eeq Here, $f_{ab}$ and $\hat{f}_{ab}$ are in $H^2(\bC^4)$. We will continue to use $\langle\cdot, \cdot \rangle $ and $\parallel\cdot \parallel$ to denote the inner product and norm on $H^2(\bC^4)\otimes \Lambda^2(\bR^4)$ respectively.

We can extend the Segal Bargmann Transform to tensor and direct products and by abuse of notation, use the same symbol. That is, $\Psi_\kappa:  \mathcal{S}_\kappa(\bR^4) \otimes \Lambda^1(\bR^3) \rightarrow H^2(\bC^4) \otimes \Lambda^1(\bR^3)$ by \beq \Psi_\kappa \left[ \sum_{1\leq i\leq 3} A_i \otimes dx^i  \right] = \sum_{1\leq i \leq 3} \Psi_\kappa [A_i] \otimes dx^i. \nonumber \eeq

And $\Psi_\kappa:  \mathcal{S}_\kappa(\bR^4) \otimes \Lambda^2(\bR^4) \rightarrow H^2(\bC^4) \otimes \Lambda^2(\bR^4)$ by \beq \Psi_\kappa \left[ \sum_{0\leq a< b\leq 3} f_{a,b} \otimes dx^a \wedge dx^b \right] = \sum_{0\leq a<b \leq 3} \Psi_\kappa [f_{a,b}] \otimes dx^a \wedge dx^b. \nonumber \eeq
\end{notation}

\begin{defn}\label{d.b.1}
Let the span of $\{dx^0 \wedge dx^i, 1\leq i \leq 3\}$ and the span of $\{dx^i \wedge dx^j, 1\leq i < j\leq 3\}$ be denoted by $\ast \Lambda^2(\bR^3)$ and $\Lambda^2(\bR^3)$ respectively. Define
\begin{align*}
\mathbb{H}&:= \left\{ [\fd_0 H^2(\bC^4)] \otimes \left[\ast \Lambda^2(\bR^3) \right]\right\} \oplus \left\{ H^2(\bC^4) \otimes \Lambda^2(\bR^3)\right\}
\subset H^2(\bC^4) \otimes \Lambda^2(\bR^4).
\end{align*}
\end{defn}

In Section \ref{s.wm}, we explained how one can rigorously construct a Gaussian measure over a Banach space $B_0(\bC^4)$ and its range $\fd_0 B_0(\bC^4)$, each containing $H^2(\bC^4)$ and $\mathcal{H}^2(\bC^3) + \fd_0 H^2(\bC^4)$ respectively. We can enlarge the inner product space $\mathbb{H}$  into a Banach space \beq \dB:= \left\{[\fd_0 B_0(\bC^4)] \otimes [\ast\Lambda^2(\bR^3)] \right\}\oplus \left\{ B_0(\bC^4) \otimes \Lambda^2(\bR^3)\right\} \subset B(\bC^4) \otimes \Lambda^2(\bR^4), \nonumber \eeq using a measurable norm $\parallel \cdot \parallel$, defined as
\beq \parallel \tilde{B} \parallel = \sum_{i=1}^3 \parallel A_i \parallel + \sum_{1 \leq i < j \leq 3}\parallel B_{ij} \parallel, \label{e.n.5} \eeq using the norms defined in Notation \ref{n.d.1} and Definition \ref{d.wm.2}, for $\tilde{B} = \sum_{i=1}^3 A_i \otimes dx^0 \wedge dx^i + \sum_{1 \leq i < j \leq 3}B_{ij}\otimes dx^i \wedge dx^j \in \dB$. This Banach space contains a dense inner product space $H^2(\bC^4) \otimes \Lambda^2(\bR^4)$. Refer to Remark \ref{r.d.1}.

\begin{notation}
We will identify the dual space $\dB^\ast$ with a dense subspace inside $\mathcal{H}^2(\bC^4) \otimes \Lambda^2(\bR^4)$.

For a linear functional $f \in \dB^\ast$, we will identify it with an element $y$ in $\mathcal{H}^2(\bC^4) \otimes \Lambda^2(\bR^4)$ and denote the pairing $(x, y)_\sharp = f(x),\ x \in \dB$.

Define $\tilde{\mu}_{\theta}$, a product Wiener measure on $\dB$ with variance $1/\theta$, from Definition \ref{d.wm.1} and Proposition \ref{p.w.1}, which makes $(\dB, \tilde{\mu}_{\theta})$ a probability space. The $\sigma$-algebra is the Borel field in $\dB$, which is equipped with a norm in Equation (\ref{e.n.5}). Hence, $(\cdot, y)_\sharp$ is a Gaussian random variable on the Banach space and its variance is given by $\parallel y \parallel^2/\theta$.
\end{notation}

\section{Definition of the Yang-Mills Path Integral in abelian case}\label{s.fi}

We will now go back to Expression \ref{e.ym.2}. Consider the abelian group $G = {\rm U}(1)$, and  $\mathfrak{u}(1) \cong \bR \otimes \sqrt{-1}$. Let $S$ be a compact flat rectangular surface $S$, with $C = \partial S$ as its boundary. Using Stokes Theorem, \beq \int_C \sum_{j=1}^3 A_j \otimes dx^j = \int_S dA , \nonumber \eeq
where $A = \sum_{j=1}^3 A_j \otimes dx^j$.
Thus, we will rewrite Expression \ref{e.ym.2} as \beq \frac{1}{Z}\int_A e^{\int_S dA \otimes i}e^{-|dA|^2/2} DA, \nonumber \eeq and make sense of it using Abstract Wiener measure.

We have to do a change of variables, i.e. $A \mapsto dA$. This is an injective map because of Proposition \ref{p.p.1}. Then we have \beq \frac{1}{Z}\int_{dA} e^{c\int_S dA \otimes i}e^{-\frac{1}{2}|dA|^2} \det(d^{-1}) D[dA], \nonumber \eeq for some undefined constant $\det(d^{-1})$. After `dividing away' this constant, we are left with \beq \frac{1}{\bar{Z}}\int_{dA} e^{c\int_S dA \otimes i}e^{-\frac{1}{2}|dA|^2} D[dA], \label{e.p.1} \eeq which we can define as a Gaussian integral, for some new normalization constant \beq \bar{Z} = \int_{dA} e^{-\frac{1}{2}|dA|^2}  D[dA] .\nonumber \eeq Note that we introduced a coupling constant $c$.

Consider a 2-dimensional Euclidean space $\bR^2$. Then,
\begin{align*}
\int_S dA = \int_{\bR^2} dA \cdot1_S = \left\langle dA, 1_S \right\rangle.
\end{align*}
Here, $\langle \cdot, \cdot \rangle$ is the $L^2$ inner product on the space of Lebesgue integrable functions on $\bR^2$. Thus, the Yang-Mills path integral becomes \beq \frac{1}{\int_{dA} e^{-|dA|^2/2}D[dA]}\int_{dA} e^{\sqrt{-1}\langle dA, 1_S \rangle}e^{-\frac{1}{2}|dA|^2}D[dA]. \nonumber \eeq

This is a Gaussian integral of the form given in Equation (\ref{e.g.2}), hence we can define the path integral as $\exp\left[ -|1_S|^2/2\right] = \exp\left[ -|S|/2\right]$, whereby $|S|$ is the area of the surface $S$. Refer to \cite{Levy} for details on the computation.

\begin{defn}
For any $f_a \in H^2(\bC^4)$, define \beq \fd \sum_{a=1}^3 f_a \otimes dx^a := \sum_{a=1}^3 [\fd_0 f_a] dx^0 \wedge dx^a + \sum_{1 \leq i < j \leq 3}[\fd_i f_j - \fd_j f_i ]dx^i \wedge dx^j. \nonumber \eeq
\end{defn}

\begin{rem}
Just as we have the differential operator $d$ acting on 1-forms in $\mathcal{S}_\kappa(\bR^4) \otimes \Lambda^1(\bR^3)$, the analogous operator will be $\fd$ acting on $\Lambda^1(\bR^3)$-valued complex functions, which are in $H^2(\bC^4) \otimes \Lambda^1(\bR^3)$.
\end{rem}

Now $\partial_a [H_{p_r}(\kappa x)] = \kappa (\partial_a H_{p_r})(\kappa x)$. From the way we defined $\fd_a$, we see that \beq\Psi_\kappa (d f) = \kappa\fd (\Psi_\kappa [f]),\ f \in \mathcal{S}_\kappa(\bR^4) \otimes \Lambda^1(\bR^3). \label{e.k.1} \eeq Using the isometry of $\Psi_\kappa$, we have
\begin{align}
\Bigg\langle d\sum_{1\leq a \leq 3}&  f_a \otimes dx^a, d\sum_{1\leq b\leq 3} g_b \otimes dx^b \Bigg\rangle \nonumber\\
&= \kappa^2\left\langle \fd\sum_{1\leq a \leq 3} \Psi_\kappa[f_a] \otimes dx^a, \fd\sum_{1\leq b \leq 3} \Psi_\kappa[g_b] \otimes dx^b \right\rangle. \label{e.in.1}
\end{align}
This gives us our first renormalization rule: for each $\fd_a$, we assign a factor $\kappa$ to it. In other words, $\fd_a \mapsto \kappa \fd_a$, under renormalization.

\begin{rem}
Rescaling of the $\epsilon$-lattice to be a unit lattice in Remark \ref{r.c.3} will also lead to renormalization rules. Because of this scaling, the fields in consideration have to be rescaled in the following manner, i.e. \beq A(\epsilon x) \longmapsto \epsilon^{-a}A\left(x \right), \label{e.bab.1} \eeq for $x$ in the unit lattice. Note that $a$ is some power, dependent on the dimension $d$ of the lattice. In \cite{propagator1}, $a = (d-2)/2$.

Its derivatives will also transform. For example, in \cite{volumeI}, if we write $\phi(x) = \epsilon^{-(d-2)/2}\phi'(x/\epsilon)$, then the derivative will transform like \beq (\partial_\mu \phi)(x) = \epsilon^{-d/2}(\partial_\mu\phi')(x/\epsilon). \nonumber \eeq As a consequence, any renormalization or scaling will affect the action and hence the propagator. See \cite{volumeI}.
\end{rem}

Balaban defined a finite dimensional integral on a finite gauge lattice of spacing $\epsilon$. Our analogous procedure will be to replace Expression \ref{e.p.1} with \beq \frac{1}{\bar{Z}}\int_{\{dA: A \in \mathcal{S}_\kappa(\bR^4)\otimes \Lambda^1(\bR^3)\}} e^{c\int_S dA \otimes i}e^{-\frac{1}{2}|dA|^2} D[dA], \label{e.p.2} \eeq for some normalization constant $\bar{Z}$. The space $\mathcal{S}_\kappa(\bR^4)\otimes \Lambda^1(\bR^3)$ is dependent on $\kappa$, whereby each $\mathcal{S}_\kappa(\bR^4)\otimes \Lambda^1(\bR^3)$ is analogous to a finite lattice.

Balaban will use a renormalization flow to map a finite lattice gauge with spacing $\epsilon$ to a finite lattice gauge of unit spacing. The Segal Bargmann Transform will define a renormalization flow,
\beq \{\Psi_\kappa: \mathcal{S}_\kappa(\bR^4)\otimes \Lambda^1(\bR^3)\longrightarrow H^2(\bC^4)\otimes \Lambda^1(\bR^3) \}_{\kappa>0}. \nonumber \eeq Using Equation (\ref{e.in.1}), the Yang-Mills action will now become \beq
\langle dA, dA \rangle =
\kappa^2\int_{\bC^4}\sum_{i=1}^3 |\fd_0 \Psi_\kappa[A_i]|^2 + \sum_{1\leq i < j \leq 3}|\fd_i\Psi_\kappa[A_j] - \fd_j \Psi_\kappa[A_i]|^2\ d\lambda_4, \nonumber \eeq whereby $A = \sum_{i=1}^3 A_i \otimes dx^i \in \mathcal{S}_\kappa(\bR^4)\otimes \Lambda^1(\bR^3)$.
Note that \beq \fd_0 \Psi_\kappa[A_i] \in \fd_0 H^2(\bC^4)\quad {\rm and} \quad \fd_i\Psi_\kappa[A_j] - \fd_j \Psi_\kappa[A_i] \in H^2(\bC^4). \nonumber \eeq
In defining this action, we applied the renormalization rule given by Equation (\ref{e.k.1}).

Using the renormalization rule and Segal Bargmann Transform as a renormalization flow and isometry, Expression \ref{e.p.2} becomes \beq \frac{1}{\bar{Z}}\int_{\fd \tilde{A}\in \mathbb{H}} e^{c\kappa\int_S \fd \tilde{A} \otimes i}e^{-\frac{\kappa^2}{2}|\fd \tilde{A}|^2} D[\fd \tilde{A}],\ \tilde{A} = \Psi_\kappa[A], \nonumber \eeq and \beq \bar{Z} = \int_{\fd \tilde{A}\in \mathbb{H}}e^{-\frac{\kappa^2}{2}|\fd \tilde{A}|^2} D[\fd \tilde{A}]. \nonumber \eeq

We are going to replace $ \fd\tilde{A}$ with \beq B \in \mathbb{H} =  \left\{\left[\fd_0 H^2(\bC^4)\right] \otimes \left[\ast\Lambda^2(\bR^3)\right]\right\} \oplus \left\{H^2(\bC^4) \otimes \Lambda^2(\bR^3) \right\}, \nonumber \eeq so our expression now becomes  \beq \frac{1}{\bar{Z}}\int_{B \in \mathbb{H}} e^{c\kappa\int_S B} e^{-\kappa^2|B|^2/2} DB. \label{e.n.4} \eeq The parameter $\kappa>0$ will tell us that we are mapping $\mathcal{S}_\kappa(\bR^4)\otimes \Lambda^1(\bR^3)$ into $H^2(\bC^4) \otimes \Lambda^1(\bR^3)$. This means that the path integral is not defined on the space of $\Lambda^2(\bR^4)$-valued real functions in $\bR^4$, but rather, over the space of $\Lambda^2(\bR^4)$-valued complex functions in $\bC^4$. Because of the term $|\kappa B|^2$, we will define the path integral in Expression \ref{e.n.4}, over in $\dB$, equipped with a product Wiener measure $\tilde{\mu}_{\kappa^2}$, variance $1/\kappa^2$.

\begin{defn}
Recall that for any $f \in \mathcal{H}^2(\bC^4)_\bC$, $\chi_w(z) \equiv e^{\bar{w}\cdot z}$ is the unique vector such that $\langle f, \chi_w \rangle = f(w)$ for any $w \in \bC^4$. Because $(\cdot, \chi_w)_\sharp \in B(\bC^4)_\bC^\ast$, we see that it is also in $B_0(\bC^4)_\bC^\ast$ and $[\fd_0 B_0(\bC^4)]_\bC^\ast$.

Define the following linear functional  \beq \tilde{\xi}_{ab}(w):= (-1)^{ab}\chi_w \otimes dx^a \wedge dx^b,\ 0 \leq a < b \leq 3. \nonumber \eeq We will identify it with a complex random variable $(\cdot, \tilde{\xi}_{ab}(w))_\sharp$ in the dual space $\dB^{\ast}_{\mathbb{C}}$.
\end{defn}

\begin{rem}
Clearly,
\begin{align}
(\cdot, \tilde{\xi}_{ab}(w))_\sharp:\ f =& \sum_{i=1}^3f_{0i}dx^0 \wedge dx^i + \sum_{1 \leq i < j \leq 3} (-1)^{ij}f_{ij}dx^i \wedge dx^j \in H^2(\bC^4)\otimes \Lambda^2(\bR^4) \nonumber \\
\longmapsto& \langle f,\tilde{\xi}_{ab}(w) \rangle = f_{ab}(w). \label{e.f.2}
\end{align}

Or, \beq \langle \fd A, \tilde{\xi}_{ab}(x) \rangle = (-1)^{ab}(\fd_a A_b - \fd_b A_a)(x). \nonumber \eeq
\end{rem}

%For each $w \in \bC^4$, we will put a $\kappa$ in front of the evaluation differentiation form and it sends
%\begin{align}
%\kappa\tilde{\xi}_{ab}(w)&: \Psi_\kappa\left[\fd \sum_{1\leq i \leq 3}g_i \otimes dx^i \right] \in H^2(\bC^4) \otimes \Lambda^1(\bR^3) \nonumber \\
%&\longmapsto \Psi_\kappa[\partial_a g_{b} - \partial_b g_a](w) \in \bC. \label{e.dr.1}
%\end{align}
%Here, $\sum_{1\leq i \leq 3}g_i \otimes dx^i \in \mathcal{S}_\kappa(\bR^4) \otimes \Lambda^1(\bR^3)$.

Our final step is to rewrite $c\kappa\int_S B$ as a random variable. We need to define a renormalization transformation, which sends \beq x \in \bR^4\quad {\rm to}\quad \kappa x/2 \in \bR^4 \hookrightarrow \bC^4,\nonumber \eeq hence embedding $\bR^4 \hookrightarrow \bC^4$. As a consequence, given the curve $C \equiv \partial S \in \bR^4$, we scale it by a factor of $\kappa/2$ inside $\bC^4$; likewise the surface $S$ is scaled by a factor of $\kappa/2$.

\begin{rem}
In \cite{volumeI}, the renormalization transformation will affect the action. In our case, the scaling affects only the integrand.
\end{rem}

Let $\sigma: [0,1]^2\equiv I^2 \rightarrow \bR^4$ be any parametrization of the rectangular surface $S$. Define \beq \tilde{\nu}_S^\kappa = \kappa\int_{I^2}dsdt\ \sum_{0\leq a < b \leq 3} \frac{\kappa^2}{4}|J_{ab}^\sigma|(s,t)\tilde{\xi}_{ab}(\kappa\sigma(s,t)/2) , \nonumber \eeq whereby $\tilde{\xi}_{ab}(w)$ and $|J_{ab}^\sigma|$ are defined in Equation (\ref{e.f.2}) and Definition \ref{d.r.1} respectively. The factor $\kappa^2/4$ comes from embedding the surface $S \subset \bR^4$ inside $\bC^4$, but scaled by the renormalization transformation $x \in \bR^4 \mapsto \kappa x/2 \in \bC^4$.

Note that $(\cdot, \tilde{\nu}_S^\kappa)_\sharp$ is a random variable on the probability space $(\dB, \tilde{\mu}_{\kappa^2})$. For $B = \sum_{0\leq a < b \leq 3} (-1)^{ab}B_{ab} \otimes dx^a \wedge dx^b$,
\begin{align*}
(\cdot, \tilde{\nu}_S^\kappa)_\sharp: B \in \dB &\longmapsto \kappa\int_{I^2}dsdt\ \sum_{0\leq a < b \leq 3} \frac{\kappa^2}{4}|J_{ab}^\sigma|(s,t)(B, \tilde{\xi}_{ab}(\kappa\sigma(s,t)/2))_\sharp \\
&\equiv \kappa \int_{I^2}dsdt\ \sum_{0\leq a < b \leq 3} \frac{\kappa^2}{4}|J_{ab}^\sigma|(s,t)B_{ab}(\kappa\sigma(s,t)/2) \\
&= \kappa \int_{\kappa S/2} B.
\end{align*}
It is a Gaussian random variable, with variance $\parallel\tilde{\nu}_S^\kappa \parallel^2/\kappa^2$.
This follows from Corollary \ref{c.w.5}, which implies $\fd_0 B_0(\bC^4)$ contains a complete set of basis in $\mathcal{H}^2(\bC^4)$.

\begin{rem}
Note that $\parallel\tilde{\nu}_S^\kappa \parallel^2 = \langle \tilde{\nu}_S^\kappa, \tilde{\nu}_S^\kappa \rangle$.
\end{rem}

Refer to Notation \ref{n.n.5}. Instead of using $\tilde{\xi}_{ab}(w)$, we replace it with $\xi_{ab}(w)$, where $\xi_{ab}(w) := \psi(w) \tilde{\xi}_{ab}(w)$. Thus define \beq \nu_S^\kappa = \kappa\int_{I^2}ds dt\ \sum_{0\leq a < b \leq 3} \frac{\kappa^2}{4}|J_{ab}^\sigma|(s,t)\xi_{ab}(\kappa\sigma(s,t)/2) . \label{e.n.3} \eeq The factor $\psi_w$ can be regarded as a form of renormalization factor, necessary when we take the limit as $\kappa$ goes to infinity.

\begin{rem}
\begin{enumerate}
  \item In the definition of the Chern Simons path integral in \cite{CSLim01}, this factor $\psi_w$ was also used.
  \item In the earlier part of this article, we explained that the propagator will blow up as $\kappa$ goes to infinity. Hence, we multiply by a factor $\psi_w$, which is strictly less that 1. This is analogous to impose a cut-off limit in the momentum space, when one uses Fourier Transform instead.
\end{enumerate}
\end{rem}

\begin{defn}\label{d.ym.1}(Definition of Yang-Mills path integral in the abelian case)\\
Let $\nu_S^\kappa$ be given by Equation (\ref{e.n.3}). For some fixed constant $c > 0$, we now define Expression \ref{e.n.4}, in the Banach space $\dB$, as
\begin{align}
\mathbb{E}_{{\rm YM}} \left[ \exp\left[ ic\left( \cdot, \nu_S^\kappa \right)_\sharp \right] \right]
:=& \int_{B \in \dB} \exp\left[  i c\left(B, \nu_S^\kappa \right)_\sharp \right] d\tilde{\mu}_{\kappa^2}(B).
\label{e.a.1}
\end{align}
\end{defn}

\begin{rem}\label{r.c.2}
\begin{enumerate}
  \item In \cite{peskin1995introduction}, they introduced the coupling constant $c$ in the exponent of the line integral over $C$.
  \item Notice that this is an infinite dimensional integral, which is analogous to a finite dimensional integral defined on a finite lattice. In lattice gauge theory, one has to take the spacing $\epsilon$ to go down to zero. This is analogous to us taking $\kappa$ going to infinity, so that the correlation between distinct points go down to zero. %To define Expression \ref{e.ym.2}, we need to take the limit as $\kappa$ going to infinity, of the RHS of Equation (\ref{e.a.1}).
  \item To prove ultraviolet stability, Balaban defined a sequence of partition functions, each partition function defined on a lattice with decreasing spacing. See \cite{1985d}. Using the renormalization flow, we also defined a sequence of path integrals, indexed by $\kappa$.
\end{enumerate}
\end{rem}

\begin{thm}\label{t.m.1}
Consider the abelian Lie group ${\rm U}(1)$. Then,
\begin{align}
\int_{B \in \dB}& \exp\left[ i\sum_{0\leq a<b  \leq 3}\int_{I^2}\frac{\kappa^2}{4}|J_{ab}^{\sigma}|(s,t) \left(B,\xi_{ab}(\kappa\sigma(s,t)/2)\right)_\sharp ds dt\right] d\tilde{\mu}_{\kappa^2}(B) \nonumber \\
=& \exp\left[ -\frac{1}{2}\left\Vert\sum_{0\leq a<b  \leq 3}\int_{I^2} dsdt\ \frac{\kappa}{4}|J_{ab}^{\sigma}|(s,t)\xi_{ab}(\kappa\sigma(s,t)/2)  \right\Vert^2\right] \nonumber \\
\longrightarrow& \exp\left[ -\frac{1}{8}\int_S d\rho \right], \label{e.qc.2}
\end{align}
as $\kappa$ goes to infinity. See Equation (\ref{e.a.3}).
\end{thm}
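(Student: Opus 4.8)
\textit{Sketch of the intended argument.} The statement has two independent parts: evaluating the Gaussian (Wiener) integral, which produces the middle line of \eqref{e.qc.2}, and then computing the $\kappa\to\infty$ asymptotics of the resulting Hilbert-space norm, which produces the last line. For the first part, note that by Definition \ref{d.ym.1} the left-hand side is exactly $\int_{B(\bR^4;\fd)}\exp\!\big[\tfrac{i}{\kappa}(A,\nu^\kappa_S)_\sharp\big]\,d\tilde\mu(A)$ with $\nu^\kappa_S$ given by \eqref{e.n.3}. First I would check that $\tfrac1\kappa\nu^\kappa_S$ is a genuine element of $B(\bR^4;\fd)^\ast_{\bC}$: each $\xi^\kappa_{ab}(w)=\psi(w)\tilde\xi^\kappa_{ab}(w)$ lies in $B(\bR^4;\fd)^\ast_{\bC}$ by Proposition \ref{p.s.1}(2) (the bounded factor $\psi(w)$ only sharpens the estimate there), and $(s,t)\mapsto\xi^\kappa_{ab}(\kappa\sigma(s,t)/2)$ is operator-norm continuous and bounded on the compact domain $I^2$, so the Bochner integral \eqref{e.n.3} converges in $B(\bR^4;\fd)^\ast_{\bC}$. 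Since $\tilde\mu$ has variance $1$, the Fourier-transform-of-a-Gaussian identity on an Abstract Wiener space, $\int e^{i(A,y)_\sharp}\,d\tilde\mu(A)=e^{-\frac12|y|^2_{\fd,\kappa}}$ (Lemma \ref{l.w.1}), applied to $y=\tfrac1\kappa\nu^\kappa_S$ — which turns the prefactor $\kappa^2/4$ in \eqref{e.n.3} into $\kappa/4$ — yields the middle line.

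For the asymptotics, write $w_{s,t}=\kappa\sigma(s,t)/2$ and expand the norm by bilinearity:
\[
\Big|\sum_{a<b}\int_{I^2}\tfrac\kappa4|J^\sigma_{ab}|\,\xi^\kappa_{ab}(w_{s,t})\,dsdt\Big|_{\fd,\kappa}^2
=\sum_{a<b}\sum_{a'<b'}\int_{I^2\times I^2}\tfrac{\kappa^2}{16}\,|J^\sigma_{ab}||J^\sigma_{a'b'}|\,\big\langle\xi^\kappa_{ab}(w_{s,t}),\xi^\kappa_{a'b'}(w_{s',t'})\big\rangle_{\fd,\kappa}\,dsdtds'dt'.
\]
Because $\xi^\kappa_{ab}(w)=\psi(w)\tilde\xi^\kappa_{ab}(w)$ with $\psi(w)=(2\pi)^{-1/2}e^{-|w|^2/2}$, the inner product factors as $\langle\xi^\kappa_{ab}(w),\xi^\kappa_{a'b'}(w')\rangle_{\fd,\kappa}=(2\pi)^{-1}e^{-|w|^2/2-|w'|^2/2}\langle\tilde\xi^\kappa_{ab}(w),\tilde\xi^\kappa_{a'b'}(w')\rangle_{\fd,\kappa}$. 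The analytic heart of the proof, which I would isolate as a lemma (and which is where the dimension being $4$ is used, cf. Appendix \ref{ss.si}), is an explicit formula for $\langle\tilde\xi^\kappa_{ab}(w),\tilde\xi^\kappa_{a'b'}(w')\rangle_{\fd,\kappa}$: via the isometry $\Psi_\kappa$ and the reproducing kernel $\chi_w(z)=e^{\bar w z}$ of $\mathcal H^2(\bC^4)$ (Corollary \ref{c.e.2}), this equals a fixed first-order differential operator in $w,w'$ applied to $e^{\bar w\cdot w'}$, which vanishes unless $(a,b)=(a',b')$ and which, after multiplication by the Gaussian factors, gives for the real arguments $w_{s,t},w_{s',t'}$ a term proportional to $e^{-\kappa^2|\sigma(s,t)-\sigma(s',t')|^2/8}$ times a prefactor carrying the correct power of $\kappa$.

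The last step is a Laplace/approximate-identity argument. With $(a,b)=(a',b')$ enforced, the kernel $\tfrac{\kappa^2}{c}\,e^{-\kappa^2|\sigma(s,t)-\sigma(s',t')|^2/8}$ concentrates on the diagonal $\{s=s',\,t=t'\}$ as $\kappa\to\infty$; expanding $\sigma(s',t')-\sigma(s,t)\approx D\sigma\cdot(s'-s,t'-t)$ and integrating in $(s',t')$ produces a factor $\propto\kappa^{-2}/\sqrt{\det(D\sigma^{\!\top}D\sigma)}$, and by Cauchy–Binet $\det(D\sigma^{\!\top}D\sigma)=\sum_{a<b}|J^\sigma_{ab}|^2$. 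Collecting all powers of $\kappa$ (the $\kappa^2/16$, the $\kappa^2$ from the kernel-derivative prefactor, against the $\kappa^{-2}$ from the Gaussian width), the double integral collapses to a single integral over $I^2$; the Gaussian normalization fixes the numerical constant, and the surviving integrand $\sum_{a<b}|J^\sigma_{ab}|^2\big/\sqrt{\sum_{a<b}|J^\sigma_{ab}|^2}$ (up to the normalization defining $\rho_S$) is identified with $\rho_S$, so that the squared norm tends to $\tfrac14\int_S\rho_S$ and the exponent to $-\tfrac18\int_S\rho_S$. Interchanging the limit with the $I^2\times I^2$ integral is justified by dominated convergence, using the bounds from the proof of Proposition \ref{p.s.1}(2) together with the Gaussian decay in $|\sigma(s,t)-\sigma(s',t')|$.

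I expect the main obstacle to be precisely this explicit inner-product computation for the evaluation differential forms $\tilde\xi^\kappa_{ab}(w)$: proving the off-diagonal vanishing $(a,b)\ne(a',b')$, extracting the localizing Gaussian with the exact width $8/\kappa^2$, and pinning down the constant $1/8$ and the density $\rho_S$; once that lemma is in place the remaining steps (the Wiener-integral identity, the approximate-identity limit, and the dominated-convergence estimate) are routine, and it is there that dimension $4$ must enter so that the pairing of $2$-forms (two indices among four) yields a nondegenerate area density rather than collapsing.
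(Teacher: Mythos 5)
Your proposal is correct and follows essentially the same route as the paper: Lemma \ref{l.w.1} for the Gaussian identity, the reproducing-kernel computation $\fd\xi_{ab}^{\kappa}(w)=\frac{1}{\kappa}\psi_{w}\chi_{w}\otimes dx^{a}\wedge dx^{b}$ (off-diagonal vanishing is automatic from the orthogonality of the $dx^{a}\wedge dx^{b}$ components, and the inner product is exactly $\psi_{w}\psi_{w'}e^{\bar{w}w'}=\frac{1}{2\pi}e^{-|w-w'|^{2}/2}$ rather than a derivative of $e^{\bar{w}w'}$), and then the Laplace localization of Lemma \ref{l.w.5}/Corollary \ref{c.w.4}. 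The only cosmetic difference is that you extract the limiting density via Cauchy--Binet on the full $4\times2$ Jacobian, giving $\sqrt{\sum_{a<b}|J^{\sigma}_{ab}|^{2}}$, which agrees with the paper's $\sum_{a<b}\rho_{S}^{ab}|J_{ab}^{\sigma}|$ since $J_{ab}^{T}J_{ab}+J_{cd}^{T}J_{cd}=D\sigma^{T}D\sigma$.
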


\begin{proof}
Recall $\psi_w $ from Notation \ref{n.n.5}, and $\xi_{ab}(w) = (-1)^{ab}\psi_w\chi_w \otimes dx^a \wedge dx^b$. For $v,w \in \bR^4 \subset \bC^4$, \beq \langle \psi_w\chi_w , \psi_v \chi_v \rangle = \psi_w\psi_v e^{w\cdot v} = \frac{1}{2\pi}e^{-|w-v|^2/2}. \nonumber \eeq
Hence \beq \langle  \xi_{ab}(w),  \xi_{ab}(\hat{w}) \rangle = \frac{1}{2\pi}e^{-|w - \hat{w}|^2/2}. \label{e.i.5} \eeq

Write \beq \eta_S = \sum_{0\leq a<b  \leq 3}\int_{I^2} dsdt\ \frac{\kappa}{4}|J_{ab}^{\sigma}|(s,t)\xi_{ab}(\kappa\sigma(s,t)/2). \nonumber \eeq
Because we are using the Wiener measure $\tilde{\mu}_{\kappa^2}$, thus $\kappa(\cdot, \eta_S)_\sharp/\parallel \eta_S \parallel$
is equal to the standard normal, in distribution.

Write $I^4 = I^2 \times I^2$. Now,
\begin{align*}
&\left\Vert\sum_{0\leq a<b \leq 3}\int_{I^2}dsdt\ |J_{ab}^\sigma|(s,t)\frac{\kappa}{4}\xi_{ab}(\kappa\sigma(s,t)/2) \right\Vert^2 \\
&=\sum_{0\leq a<b \leq 3}\int_{I^4} ds dt d\bar{s}d\bar{t}\ \frac{\kappa^2}{16}|J_{ab}^{\sigma}|(s,t)|J_{ab}^{\sigma}|(\bar{s},\bar{t})\left\langle \xi_{ab}(\kappa\sigma(s,t)/2),  \xi_{ab}(\kappa\sigma(\bar{s}, \bar{t})/2) \right\rangle \\
&= \frac{1}{4(2\pi)}\sum_{0\leq a<b \leq 3}\int_{I^2 \times I^2} ds dt d\bar{s}d\bar{t}\ \frac{\kappa^2}{4}|J_{ab}^{\sigma}|(s,t)|J_{ab}^{\sigma}|(\bar{s},\bar{t})e^{-\kappa^2|\sigma(s,t) - \sigma(\bar{s}, \bar{t})|^2/8}  \\
&\longrightarrow_{\kappa \rightarrow \infty} \frac{1}{4} \sum_{0\leq  a<b \leq 3}\int_{I^2} ds dt\ |J_{ab}^\sigma|(s,t) \rho_\sigma^{ab}(\sigma(s,t)),
\end{align*}
the limit follows from Corollary \ref{c.w.4}.

For $\theta > 0$,
\beq \frac{\theta}{\sqrt{2\pi}}\int_{\bR} e^{\alpha x}e^{-\theta^2 x^2/2}dx = e^{\alpha^2/2\theta^2}.\label{e.g.2} \eeq Therefore, the path integral is equal to
\begin{align*}
\exp&\left[ -\frac{1}{2}\left\Vert\sum_{0\leq a<b \leq 3}\int_{I^2}dsdt\ |J_{ab}^\sigma|(s,t)\frac{\kappa}{4}\xi_{ab}(\kappa\sigma(s,t)/2) \right\Vert^2\right] \\
&\longrightarrow \exp\left[ -\frac{1}{8} \sum_{0\leq  a<b \leq 3}\int_{I^2} ds dt\ |J_{ab}^\sigma|(s,t) \rho_\sigma^{ab}(\sigma(s,t))\right] = e^{-\frac{1}{8}\int_S d\rho },
\end{align*}
as $\kappa$ goes to infinity.
\end{proof}

In the case of dimension $d = 4$, if one consider the non-abelian Yang-Mills gauge theory on a lattice gauge, the idea that the coupling constant should depend on $\epsilon$, and it goes down to 0 as $\epsilon$ goes down to 0, is known as asymptotic freedom. Refer to \cite{Gross01}.

\begin{rem}
Balaban's work does not incorporate asymptotic freedom. See \cite{1987-99_RG-01}.
\end{rem}

In this article, we will define asymptotic freedom, by setting the coupling constant $c$ to be inversely proportional to $\kappa$, i.e. $c = 1/\kappa$. This means that as $\kappa$ goes to infinity, the coupling constant goes down to 0.

If we set the coupling constant $c = 1/\kappa$ in RHS of Equation (\ref{e.a.1}), then a direct computation will show that
\begin{align}
&\int_{\dB} \exp\left[  \frac{i}{\kappa}\left(\cdot, \nu_S^\kappa  \right)_\sharp \right] d\tilde{\mu}_{\kappa^2} \nonumber \\
&= \int_{\dB} \exp\left[ i\sum_{0\leq a<b  \leq 3}\int_{I^2}\frac{\kappa^2}{4}|J_{ab}^{\sigma}|(s,t) \left(\cdot,\xi_{ab}(\kappa\sigma(s,t)/2) \right)_\sharp ds dt\right] d\tilde{\mu}_{\kappa^2}. \label{e.c.3}
\end{align}

\begin{rem}\label{r.c.1}
In \cite{Gross01}, Gross added $1/c^2$ to the Yang-Mills action, $c$ is the coupling constant.  But here, adding a factor $\kappa^2$ in front of the Yang-Mills action, is due to renormalization, and not due to asymptotic freedom.
\end{rem}

From the discussion in Appendix \ref{ss.si}, we note that $\int_S d\rho$ is the area of the surface $S$. Thus, the limit of the path integral given in Equation (\ref{e.c.3}) as $\kappa \rightarrow \infty$, will give us the Area Law formula, provided we assume asymptotic freedom holds, i.e. $c = 1/\kappa$. The Area Law formula is used to explain why the strong force gets stronger as distance increases, thus we are not able to find isolated quarks in nature. See \cite{Nair}.

But we know in nature that there is no binding quarks for electromagnetic force. Suppose the coupling constant $c$ is fixed and is independent of $\kappa$. Then, the preceding calculations can be modified accordingly to show that the limit will give us 0, thus showing that in the ${\rm U}(1)$ gauge theory, the Area Law formula does not hold.

\begin{cor}
Consider the abelian Lie group ${\rm U}(1)$. Define the Yang-Mills path integral using RHS of Equation (\ref{e.a.1}). If the coupling constant $c$ is fixed, then we have
\begin{align}
\int_{B \in \dB} \exp\left[  c\left(B, \nu_S^\kappa \otimes i  \right)_\sharp \right] d\tilde{\mu}_{\kappa^2}(B)
\longrightarrow& 0, \nonumber
\end{align}
as $\kappa$ goes to infinity.
\end{cor}

\section{Concluding remarks}\label{s.cr}

In \cite{MR2076918}, Hahn used White Noise analysis, to make sense of the Wilson Loop observable, but using the Chern-Simons action instead. In the completion of the Hilbert space containing (real) Schwartz functions, one obtains the space containing generalized functions. To make sense of the Chern-Simons path integral to obtain the linking number for link, he had to `smear' the link. In \cite{CSLim01}, we used the Segal Bargmann approach and defined the Chern-Simons path integral using holomorphic functions, which motivate us to use the same construction for Yang-Mills path integral. Indeed, we obtained the same linking number for links, but without having to `smear' the link.

Instead of making sense of the Wilson Loop observable (for a given loop $C$) given by Expression \ref{e.ym.2}, we made sense of Expression \ref{e.n.4} in place of it, for a surface $S$ whose boundary is $C$. This path integral is made sense of over the quantum space $\dB$, which consists of $\Lambda^2(\bR^4)$-valued holomorphic functions over in $\mathbb{C}^4$. Just as in the Chern-Simons case, when one tries to complete the Hilbert space containing $\Lambda^2(\bR^4)$-valued Schwartz functions over in $\bR^4$, one will end up with generalized functions.

Recall in Section \ref{s.gm}, we explained that one has to deal with a Dirac Delta functional, if we insist on working in the Hilbert space containing real Schwartz functions. The problem comes when we try to write $\int_S dA$ in the form $\langle dA, \mu_S \rangle$, when both $dA$ and $\mu_S$ are generalized functions. To remedy this, we would have to `smear' the surface $S$, and approximate $\mu_S$ with a sequence of $\Lambda^2(\bR^4)$-valued Schwartz functions $\{\mu_S^\kappa\}_{\kappa > 0}$, which we prefer to avoid. This will be similar to Hanh's approach in defining the Chern-Simons path integral. By taking the limit as $\kappa \rightarrow \infty$, of a sequence of path integrals defined using $\mu_S^\kappa$, one will still obtain the Area Law formula, after applying renormalization rules and asymptotic freedom, as discussed throughout this article.

Both approaches will give us the same result, even though the construction is entirely different and not equivalent. But we firmly believe that the Segal Bargmann approach is easier and it avoids technical difficulties like dealing with a Dirac Delta functional and `smearing' the surface. Furthermore, when one considers the non-abelian Yang-Mills path integral in the sequel to this article, the triple and quartic product terms in the Yang-Mills action can be easily define using holomorphic functions in the quantum space $\dB$, whereas products of distributions are ill-defined.

\appendix

\section{Surface Integrals}\label{ss.si}

Let $S$ be a compact surface embedded in $\bR^4$. Suppose $\sigma\equiv ( \sigma_0, \sigma_1, \sigma_2, \sigma_3): [0,1]^2 \equiv I^2 \rightarrow \bR^4$ is its parametrization. Here, $\sigma' = \partial \sigma/\partial s$ and $\dot{\sigma} = \partial \sigma/\partial t$. Write $x \equiv (x_0, x_1, x_2, x_3)$.

\begin{defn}\label{d.r.1}
Define Jacobian matrices,
\begin{align}
J_{ij}^\sigma(s,t) = \left(
               \begin{array}{cc}
                 \sigma_i'(s,t) & \dot{\sigma}_i(s,t) \\
                 \sigma_j'(s,t) & \dot{\sigma}_j(s,t) \\
               \end{array}
             \right), \nonumber
\end{align}
and write $|J^\sigma_{ij}| = |\det{J^\sigma_{ij}}|$ and $W_{ab}^{ cd} := J_{cd}^\sigma J_{ab}^{\sigma,-1}$, $a, b, c, d$ all distinct. Note that $W_{cd}^{ab} = (W_{ab}^{cd})^{-1}$.

For $a,b, c, d$ all distinct, define $\rho_\sigma^{ab}: I^2 \rightarrow \bR$ by
\begin{align}
\rho_\sigma^{ab} =& \frac{1}{\sqrt{\det\left[ 1+ W_{ab}^{cd,T}W_{ab}^{cd}\right]}}. \nonumber
\end{align}
\end{defn}

\begin{lem}\label{l.w.5}
Let $\vec{x}$, $\vec{y} $ be orthogonal directional vectors in $\bR^4$. Suppose $S$ is a flat compact rectangular surface parametrized by $\sigma: (s,t) \in I^2 \mapsto \vec{a} + s \vec{x} + t\vec{y}$, for some vector $\vec{a} \in \bR^4$.

Assume that $\sigma_a', \dot{\sigma}_a, \sigma_b', \dot{\sigma}_b \neq 0$ and let $\sigma(\bp) = \mathbf{x}$, $\bp = (s_0, t_0)$ lie in the interior of $I^2$. We have for distinct $a, b, c, d$,
\beq \int_{U}  \frac{\kappa^2}{4}e^{-\kappa^2|\sigma(s,t)-\mathbf{x}|^2/8} |\sigma'_a\dot{\sigma}_b - \sigma_b'\dot{\sigma}_a|(s,t) ds dt
= \frac{2\pi}{\sqrt{\det\left[1 + W_{ab}^{cd,T}W_{ab}^{cd}(\bp)\right]}} +O(e^{-\kappa^2 \bar{C}}), \nonumber \eeq
for some positive constant $\bar{C}$ that depends on $\mathbf{x}$ in the surface $S$, independent of $\kappa$.
\end{lem}

\begin{proof}
Write $\underline{s}=s-s_0$, $\underline{t}=t-t_0$ and $J_{ab} \equiv J_{ab}^\sigma(\bp)$. Let $\sigma_i(s_0, t_0) = \mathbf{x}_i$. Using Taylor's theorem,
\beq \sigma_i(s,t)= \mathbf{x}_i + \underline{s} \sigma_i'(\bp) + \underline{t}\dot{\sigma}_i(\bp) . \nonumber \eeq

Use a transformation \beq
\left(
  \begin{array}{c}
    u \\
    v \\
  \end{array}
\right) =
\left(
  \begin{array}{c}
    \underline{s}\sigma_a'(\bp) + \underline{t}\dot{\sigma}_a(\bp) \\
    \underline{s}\sigma_b'(\bp) + \underline{t}\dot{\sigma}_b(\bp) \\
  \end{array}
\right) = J_{ab}
\left(
  \begin{array}{c}
    \underline{s} \\
    \underline{t} \\
  \end{array}
\right)
. \nonumber \eeq
And let $0$ be in the interior of $V$, which is the range of $I^2$ under this transformation. Then, we have
\begin{align*}
\int_{I^2} \frac{\kappa^2}{4}& e^{-\kappa^2|\sigma(s,t)-\mathbf{x}|^2/8} |\sigma'_a\dot{\sigma}_b - \sigma_b'\dot{\sigma}_a|(s,t)\ ds dt  \\
=& \int_{I^2} \frac{\kappa^2}{4}e^{-\kappa^2 \left[|J_{ab}(\underline{s},\underline{t})^T|^2 + |J_{cd}(\underline{s},\underline{t})^T|^2\right]/8}  |\sigma'_a\dot{\sigma}_b - \sigma_b'\dot{\sigma}_a| (\bp)\  ds dt  \\
=& \int_V \frac{\kappa^2}{4}e^{-\kappa^2\left[(u^2+v^2) \right]/8} e^{-\kappa^2 \left[ |J_{cd}J_{ab}^{-1}(u,v)^T|^2  \right]/8}\  du dv  \\
=& \int_V \frac{\kappa^2}{4}e^{-\kappa^2  \left[\langle (1 + J_{ab}^{T,-1} J_{cd}^T J_{cd}J_{ab}^{-1})(u,v)^T, (u,v)^T \rangle \right] /8}\  du dv .
\end{align*}

Do another substitution, $\kappa u/2 \mapsto u$, $\kappa v/2 \mapsto v$. If $0 \in V = [\underline{\delta}, \overline{\delta}] \times [\underline{\epsilon}, \overline{\epsilon}] $, then the transformed region becomes \beq V_\kappa := [\kappa\underline{\delta}/2, \kappa\overline{\delta}/2] \times [\kappa\underline{\epsilon}/2, \kappa\overline{\epsilon}/2]. \nonumber \eeq Let \beq D = \frac{2\pi}{\sqrt{\det\left[1 + J_{ab}^{T,-1} J_{cd}^T J_{cd}J_{ab}^{-1}\right]}}= \frac{2\pi|J_{ab}|}{\sqrt{\det\left[J_{ab}^TJ_{ab} +  J_{cd}^T J_{cd}\right]}}. \nonumber \eeq

So the integral now becomes
\begin{align*}
\int_{V_\kappa}& e^{- \left[\langle (1 + J_{ab}^{T,-1} J_{cd}^T J_{cd}J_{ab}^{-1})(u,v)^T, (u,v)^T \rangle \right] /2}\  du dv \\
=& \int_{\bR^2}e^{- \left[\langle (1 + J_{ab}^{T,-1} J_{cd}^T J_{cd}J_{ab}^{-1})(u,v)^T, (u,v)^T \rangle \right] /2}\  du dv  \\
&- \int_{\bR^2\setminus V_\kappa }e^{- \left[\langle (1 + J_{ab}^{T,-1} J_{cd}^T J_{cd}J_{ab}^{-1})(u,v)^T, (u,v)^T \rangle \right] /2}\  du dv = D + O(e^{-\kappa^2 \bar{C}}),
\end{align*}
for positive constant $\bar{C}$ that depends only on the point $\mathbf{x}$. Because 0 is in the interior of $V \subset \bR^2$, we can choose $\bar{C} = \frac{1}{4}{\rm min}\{|\underline{\delta}|^2, \overline{\delta}^2, |\underline{\epsilon}|^2, \overline{\epsilon}^2\}>0$.
\end{proof}

\begin{rem}
It is crucial that we assume that $\mathbf{x}$ lies in the interior, as the result is no longer true if $\mathbf{x}$ lies on the boundary of $S$.
\end{rem}

\begin{cor}\label{c.w.4}
Refer to Definition \ref{d.r.1}. Let $\sigma$ be any parametrization for a compact flat rectangular surface $S \subset \bR^4$. We have
\begin{align}
\sum_{0 \leq a<b \leq 3}&\frac{\kappa^2}{4} \int_{I^2 \times I^2} ds dt d\bar{s}d\bar{t}\ e^{-\kappa^2|\sigma(s,t)-\sigma(\bar{s},\bar{t})|^2/8} |J_{ab}^\sigma|(s,t)|J_{ab}^\sigma|(\bar{s},\bar{t}) \nonumber \\
\longrightarrow& 2\pi \sum_{0 \leq a<b \leq 3}\int_{I^2}\rho_\sigma^{ab}(s,t)|J_{ab}^\sigma|(s,t)\label{e.a.3}
\ ds dt,
\end{align}
as $\kappa$ goes to infinity. We will write
\beq 2\pi \sum_{0 \leq a<b \leq 3}\int_{I^2}\rho_\sigma^{ab}(s,t)|J_{ab}^\sigma|(s,t)
\ ds dt =: 2\pi \int_S d\rho,\nonumber \eeq
which is independent of the parametrization $\sigma$ used.
\end{cor}

\begin{proof}
Write \beq f_\kappa(s, t) = \sum_{0 \leq a<b \leq 3}\frac{\kappa^2}{4} \int_{I^2 } d\bar{s} d\bar{t} \ e^{-\kappa^2|\sigma(s,t)-\sigma(\bar{s},\bar{t})|^2/8} |J_{ab}^\sigma|(s,t)|J_{ab}^\sigma|(\bar{s},\bar{t}). \nonumber \eeq From the previous lemma, we see that \beq \lim_{\kappa \rightarrow \infty}f_{\kappa}(s, t) = 2\pi \sum_{0 \leq a<b \leq 3}\rho_\sigma^{ab}(s,t)|J_{ab}^\sigma|(s,t) \nonumber \eeq pointwise, for $(s,t)$ in the interior of $I^2$. Since $|f_\kappa| \leq M$ for some constant $M$, we can  apply the Dominated Convergence Theorem and see that
\begin{align*}
\lim_{\kappa \rightarrow \infty}\sum_{0 \leq a<b \leq 3}&\frac{\kappa^2}{4} \int_{I^2 \times I^2} ds dt d\bar{s}d\bar{t}\ e^{-\kappa^2|\sigma(s,t)-\sigma(\bar{s},\bar{t})|^2/8} |J_{ab}^\sigma|(s,t)|J_{ab}^\sigma|(\bar{s},\bar{t}) \\
=& \int_{I^2}\lim_{\kappa \rightarrow \infty} f_\kappa(s,t)\ ds dt = 2\pi \int_S d\rho.
\end{align*}
\end{proof}

\begin{rem}
Clearly, the quantity $\int_S d\rho$ is independent of any orthonormal basis used to span $\bR^4$. Indeed, this formula can be used for any compact surface $S$ and it is straight forward to show that $\int_S d\rho$ is equal to the area of the surface $S$.
\end{rem}


\begin{thebibliography}{10}

\bibitem{CSLim01}
A.~P.~C. Lim, ``Chern-{S}imons path integral on {$\Bbb R^3$} using abstract
  {W}iener measure,'' {\em Commun. Math. Anal.}, vol.~11, no.~2, pp.~1--22,
  2011.

\bibitem{PhysRevD.21.2291}
A.~H. Guth, ``Existence proof of a nonconfining phase in four-dimensional u(1)
  lattice gauge theory,'' {\em Phys. Rev. D}, vol.~21, pp.~2291--2307, Apr
  1980.

\bibitem{frohlich1982}
J.~Frohlich and T.~Spencer, ``Massless phases and symmetry restoration in
  abelian gauge theories and spin systems,'' {\em Comm. Math. Phys.}, vol.~83,
  no.~3, pp.~411--454, 1982.

\bibitem{MR1312606}
R.~W.~R. Darling, {\em Differential forms and connections}.
\newblock Cambridge: Cambridge University Press, 1994.

\bibitem{MR1031992}
H.~B. Lawson, Jr. and M.-L. Michelsohn, {\em Spin geometry}, vol.~38 of {\em
  Princeton Mathematical Series}.
\newblock Princeton, NJ: Princeton University Press, 1989.

\bibitem{rivasseau01}
R.~S. J.~Magnen, V.~Rivasseau, ``Construction of $ym_4$ with an infrared
  cutoff,'' {\em Commun. Math. Phys.}, vol.~155, pp.~325--383, 1993.

\bibitem{glimm1981quantum}
J.~Glimm and A.~Jaffe, {\em Quantum physics: a functional integral point of
  view}.
\newblock Springer-Verlag, 1981.

\bibitem{2014arXiv1406.4177F}
S.~{Farinelli}, ``{Four Dimensional Quantum Yang-Mills Theory and Mass Gap},''
  {\em ArXiv e-prints}, June 2014.

\bibitem{Levy}
T.~Levy, ``Yang-mills measure on compact surfaces,'' {\em Mem. Amer. Math.
  Soc.}, vol.~166, no.~790, 2003.

\bibitem{GROSS198965}
L.~Gross, C.~King, and A.~Sengupta, ``Two dimensional yang-mills theory via
  stochastic differential equations,'' {\em Annals of Physics}, vol.~194,
  no.~1, pp.~65 -- 112, 1989.

\bibitem{douglas01}
M.~Douglas, ``Report on the status of the yang-mills problem,'' 2003.

\bibitem{2018arXiv180301950C}
S.~{Chatterjee}, ``{Yang-Mills for probabilists},'' {\em arXiv e-prints},
  p.~arXiv:1803.01950, Mar 2018.

\bibitem{volumeI}
T.~Balaban, ``(higgs)$_{2,3}$ quantum fields in a finite volume. i. a lower
  bound,'' {\em Comm. Math. Phys.}, vol.~85, no.~4, pp.~603--626, 1982.

\bibitem{volumeII}
T.~Balaban, ``(higgs)$_{2,\,3}$ quantum fields in a finite volume. ii. an upper
  bound,'' {\em Comm. Math. Phys.}, vol.~86, no.~4, pp.~555--594, 1982.

\bibitem{volumeIII}
T.~Balaban, ``(higgs)$_{2,3}$ quantum fields in a finite volume. iii.
  renormalization,'' {\em Comm. Math. Phys.}, vol.~88, no.~3, pp.~411--445,
  1983.

\bibitem{propagator1}
T.~Balaban, ``Propagators and renormalization transformations for lattice gauge
  theories. i,'' {\em Comm. Math. Phys.}, vol.~95, no.~1, pp.~17--40, 1984.

\bibitem{propagator2}
T.~Balaban, ``Propagators and renormalization transformations for lattice gauge
  theories. ii,'' {\em Comm. Math. Phys.}, vol.~96, no.~2, pp.~223--250, 1984.

\bibitem{1985a}
T.~Balaban, ``Averaging operations for lattice gauge theories,'' {\em Comm.
  Math. Phys.}, vol.~98, no.~1, pp.~17--51, 1985.

\bibitem{1985b}
T.~Balaban, ``Spaces of regular gauge field configurations on a lattice and
  gauge fixing conditions,'' {\em Comm. Math. Phys.}, vol.~99, no.~1,
  pp.~75--102, 1985.

\bibitem{1985c}
T.~Balaban, ``Propagators for lattice gauge theories in a background field,''
  {\em Comm. Math. Phys.}, vol.~99, no.~3, pp.~389--434, 1985.

\bibitem{1985d}
T.~Balaban, ``Ultraviolet stability of three-dimensional lattice pure gauge
  field theories,'' {\em Comm. Math. Phys.}, vol.~102, no.~2, pp.~255--275,
  1985.

\bibitem{1985e}
T.~Balaban, ``The variational problem and background fields in renormalization
  group method for lattice gauge theories,'' {\em Comm. Math. Phys.}, vol.~102,
  no.~2, pp.~277--309, 1985.

\bibitem{1987-99_RG-01}
T.~Balaban, ``Renormalization group approach to lattice gauge field theories.
  i. generation of effective actions in a small field approximation and a
  coupling constant renormalization in four dimensions,'' {\em Comm. Math.
  Phys.}, vol.~109, no.~2, pp.~249--301, 1987.

\bibitem{1987-99_RG-02}
T.~Balaban, ``Renormalization group approach to lattice gauge field theories.
  ii. cluster expansions,'' {\em Comm. Math. Phys.}, vol.~116, no.~1,
  pp.~1--22, 1988.

\bibitem{1987-99_RG-03}
T.~Balaban, ``Convergent renormalization expansions for lattice gauge
  theories,'' {\em Comm. Math. Phys.}, vol.~119, no.~2, pp.~243--285, 1988.

\bibitem{1987-99_RG-04}
T.~Balaban, ``Large field renormalization. i. the basic step of the ${\bf r}$
  operation,'' {\em Comm. Math. Phys.}, vol.~122, no.~2, pp.~175--202, 1989.

\bibitem{1987-99_RG-05}
T.~Balaban, ``Large field renormalization. ii. localization, exponentiation,
  and bounds for the $\bf r$ operation,'' {\em Comm. Math. Phys.}, vol.~122,
  no.~3, pp.~355--392, 1989.

\bibitem{brydges01}
D.~{Brydges}, J.~{Frohlich}, and E.~{Seiler}, ``{On the construction of
  quantized gauge fields. I. General results},'' {\em Annals of Physics},
  vol.~121, pp.~227--284, Sept. 1979.

\bibitem{brydges02}
D.~C. Brydges, J.~Frohlich, and E.~Seiler, ``Construction of quantised gauge
  fields. ii. convergence of the lattice approximation,'' {\em Comm. Math.
  Phys.}, vol.~71, no.~2, pp.~159--205, 1980.

\bibitem{brydges03}
D.~C. Brydges, J.~Frohlich, and E.~Seiler, ``On the construction of quantized
  gauge fields. iii. the two-dimensional abelian higgs model without cutoffs,''
  {\em Comm. Math. Phys.}, vol.~79, no.~3, pp.~353--399, 1981.

\bibitem{MR1183739}
N.~M.~J. Woodhouse, {\em Geometric quantization}.
\newblock Oxford Mathematical Monographs, New York: The Clarendon Press Oxford
  University Press, second~ed., 1992.
\newblock Oxford Science Publications.

\bibitem{peskin1995introduction}
M.~Peskin and D.~Schroeder, {\em An Introduction to Quantum Field Theory}.
\newblock Advanced book classics, Avalon Publishing, 1995.

\bibitem{MR0461643}
H.~H. Kuo, {\em Gaussian measures in {B}anach spaces}.
\newblock Lecture Notes in Mathematics, Vol. 463, Berlin: Springer-Verlag,
  1975.

\bibitem{jaffe2006quantum}
A.~Jaffe and E.~Witten, ``Quantum yang-mills theory,'' {\em The millennium
  prize problems}, no.~1, p.~129, 2006.

\bibitem{Araki:1962zhd}
H.~Araki, K.~Hepp, and D.~Ruelle, ``{On the asymptotic behaviour of Wightman
  functions in space-like directions},'' {\em Helv. Phys. Acta}, vol.~35,
  no.~III, pp.~164--174, 1962.

\bibitem{Gross01}
L.~Gross, ``Lattice gauge theory; heuristics and convergence,'' {\em Stochastic
  Processes - Mathematics and Physics. Lecture Notes in Mathematics},
  vol.~1158, pp.~130--140, 1986.

\bibitem{Nair}
V.~P. Nair, {\em Quantum field theory. A modern perspective}.
\newblock New York: Springer, 2005.

\bibitem{MR2076918}
A.~Hahn, ``The {W}ilson loop observables of {C}hern-{S}imons theory on {${\Bbb
  R}\sp 3$} in axial gauge,'' {\em Comm. Math. Phys.}, vol.~248, no.~3,
  pp.~467--499, 2004.

\end{thebibliography}
\end{document}